\newtheorem{thm}{Theorem}[section]
\newtheorem{lem}{Lemma}[section]
\newtheorem{dfn}{Definition}[section]
\newtheorem{cor}{Corollary}[section]
\DeclareMathOperator{\diam}{diam}
\DeclareMathOperator{\Spec}{Spec}
\title{Extremal problems for the eccentricity matrices of complements of trees}
\author{Iswar Mahato \thanks{Department of Mathematics, Indian Institute of Technology Kharagpur, Kharagpur 721302, India. Email: iswarmahato02@gmail.com, iswarmahato02@iitkgp.ac.in}\  \and M. Rajesh Kannan\thanks{Department of Mathematics, Indian Institute of Technology Hyderabad, Hyderabad 502285, India. Email: rajeshkannan1.m@gmail.com, rajeshkannan@math.iith.ac.in }}
\date{\today}
\begin{document}
\maketitle

\begin{abstract}
	The eccentricity matrix of a connected graph $G$, denoted by $\mathcal{E}(G)$, is obtained from the distance matrix of $G$ by keeping the largest nonzero entries in each row and each column, and leaving zeros in the remaining ones. The $\mathcal{E}$-eigenvalues of $G$ are the eigenvalues of $\mathcal{E}(G)$, in which the largest one is the $\mathcal{E}$-spectral radius of $G$. The $\mathcal{E}$-energy of $G$ is the sum of the absolute values of all $\mathcal{E}$-eigenvalues of $G$. In this article, we study some of the extremal problems for eccentricity matrices of complements of trees and characterize the extremal graphs. First, we determine the unique tree whose complement has minimum (respectively, maximum) $\mathcal{E}$-spectral radius among the complements of trees. Then, we prove that the $\mathcal{E}$-eigenvalues of the complement of a tree are symmetric about the origin. As a consequence of these results, we characterize the trees whose complement has minimum (respectively, maximum) least $\mathcal{E}$-eigenvalues among the complements of trees. Finally, we discuss the extremal problems for the second largest $\mathcal{E}$-eigenvalue and the $\mathcal{E}$-energy of complements of trees and characterize the extremal graphs. As an application, we obtain a Nordhaus-Gaddum type lower bounds for the second largest $\mathcal{E}$-eigenvalue and $\mathcal{E}$-energy of a tree and its complement. 
\end{abstract}

{\bf AMS Subject Classification (2010):} 05C50, 05C35.

\textbf{Keywords.} Complements of trees, Eccentricity matrix, $\mathcal{E}$-spectral radius, Second largest $\mathcal{E}$-eigenvalue, Least $\mathcal{E}$-eigenvalue, $\mathcal{E}$-energy.

\section{Introduction}\label{sec1}
Throughout the paper, we consider finite, simple, and connected graphs. Let $G=(V(G),E(G))$ be a graph with the vertex set $V(G)=\{v_1,v_2,\hdots,v_n\}$ and the edge set $E(G)=\{e_1,e_2,\hdots,e_m\}$. The number of vertices in $G$ is the \textit{order} of $G$. If two vertices $u$ and $v$ in $G$ are adjacent, then we write $u\sim v$, otherwise $u\nsim v$. The \textit{adjacency matrix} $A(G)$ of $G$ is the $n \times n$ matrix with its rows and columns indexed by the vertices of $G$, and the entries are defined as
$$A(G)_{uv}=
\begin{cases}
	\text{$1$} & \quad\text{if $u\sim v$,}\\
	\text{0} & \quad\text{otherwise.}
\end{cases}$$
Let $\lambda_1(G)\geq \lambda_2(G)\geq \hdots \geq \lambda_n(G)$ be the eigenvalues of the adjacency matrix $A(G)$ of $G$. The largest eigenvalue of $A(G)$ is the \textit{spectral radius} of $G$. The \textit{energy (or the $A$-energy)} of $G$ is defined as $E_A(G)=\sum_{i=1}^n \abs{\lambda_i(G)}$. The \textit{distance} between the vertices $u$ and $v$ in $G$, denoted by $d_G(u,v)$, is the length of a shortest path between them in $G$, and define $d_G(u,u) =0$  for all $u \in V(G)$. The \textit{distance matrix} $D(G)$ of $G$ is the $n \times n$ matrix whose rows and columns are indexed by the vertices of $G$ and the $(u,v)$-th entry is equal to $d_G(u,v)$. Let $N_G(v)$ denote the collection of vertices that are adjacent to the vertex $v$ in $G$, and $N_G(v)$ is called the \textit{neighborhood} of $v $ in $G$. The \textit{eccentricity} $e_G(v)$ of a vertex $v\in V(G)$ is the maximum distance from $v$ to all other vertices of $G$. The maximum eccentricity of all vertices of $G$ is the \textit{diameter} of $G$, which is denoted by $\diam(G)$. The \textit{diametrical path} is a path whose length is equal to the diameter of $G$.

The \textit{eccentricity matrix} of a graph $G$ on $n$ vertices, denoted by $\mathcal{E}(G)$, is the $n\times n$ matrix whose rows and columns are indexed by the vertices of $G$, and the entries are defined as
$$\mathcal{E}(G)_{uv}=
\begin{cases}
	\text{$d_G(u,v)$} & \quad\text{if $d_G(u,v)=\min\{e_G(u),e_G(v)\}$,}\\
	\text{0} & \quad\text{otherwise.}
\end{cases}$$
The eccentricity matrix $\mathcal{E}(G)$ of a graph $G$ is a real symmetric matrix, and hence all of its eigenvalues are real. The eigenvalues of $\mathcal{E}(G)$ are the $\mathcal{E}$-eigenvalues of $G$, in which the largest one is the $\mathcal{E}$-spectral radius of $G$. The set of all $\mathcal{E}$-eigenvalues of $G$ is the $\mathcal{E}$-spectrum of $G$. If $\xi_1>\xi_2>\hdots >\xi_k$ are the distinct $\mathcal{E}$-eigenvalues of $G$, then we write the $\mathcal{E}$-spectrum of $G$ as
\[ \Spec_{\varepsilon}(G)=
\left\{ {\begin{array}{cccc}
		\xi_1 & \xi_2  &\hdots & \xi_k\\
		m_1& m_2& \hdots &m_k\\
\end{array} } \right\},
\]
where $m_i$ is the  multiplicity of $\xi_i$ for $i=1,2,\hdots,k$. 

Spectral extremal problems are one of the interesting problems in spectral graph theory. Recently, the extremal problems for eccentricity matrices of graphs have gained significant importance and attracted the attention of researchers. In \cite{wei2020solutions}, Wei et al. considered the extremal problem for $\mathcal{E}$-spectral radius of trees and determined the trees with minimum $\mathcal{E}$-spectral radius among all trees on $n$ vertices. Also, they characterized the trees with minimum $\mathcal{E}$-spectral radius among the trees with a given diameter. In \cite{mahato2021spectral}, the authors studied the minimal problem for $\mathcal{E}$-spectral radius of graphs with a given diameter and characterized the extreme graphs. Moreover, they identified the unique bipartite graph with minimum $\mathcal{E}$-spectral radius. Wang et al. \cite{wang2020boiling} characterized the graphs with minimum and second minimum $\mathcal{E}$-spectral radius as well as the graphs with maximum least and second least $\mathcal{E}$-eigenvalues. Recently, He and Lu \cite{he2022largest} considered the maximal problem for the $\mathcal{E}$-spectral radius of trees with the fixed odd diameter and determined the extremal trees. Wei et al. \cite{wei2022characterizing}  characterized the trees with second minimum $\mathcal{E}$-spectral radius and identified the trees with the small matching number having the minimum $\mathcal{E}$-spectral radius. Very Recently, Wei and Li \cite{wei2022eccentricity} studied the relationship between the majorization and $\mathcal{E}$-spectral radius of complete multipartite graphs and determined the extremal complete multipartite graphs with minimum and maximum $\mathcal{E}$-spectral radius. Mahato and Kannan \cite{mahato2022minimizers} considered the extremal problem for the second largest $\mathcal{E}$-eigenvalue of trees and determined the unique tree with minimum second largest $\mathcal{E}$-eigenvalue among all trees on $n$ vertices other than the star. For more advances on the eccentricity matrices of graphs, we refer to \cite{lei2022spectral,lei2021eigenvalues,mahato2020spectra,mahato2022eccentricity,mahato2022inertia,patel2021energy,wang2022spectraldetermination,wang2020spectral}.


The \textit{eccentricity energy (or the $\mathcal{E}$-energy)} of a graph $G$ is defined \cite{wang2019graph} as 
$$E_{\mathcal{E}}(G)=\sum_{i=1}^n \abs{\xi_i},$$ 
where $\xi_1,\xi_2,\hdots,\xi_n$ are the  $\mathcal{E}$-eigenvalues of $G$. Recently, many researchers focused on the eccentricity energy of graphs. In \cite{wang2019graph}, Wang et al. studied the $\mathcal{E}$-energy of graphs and obtained some bounds for the $\mathcal{E}$-energy of graphs and determined the corresponding extremal graphs. Lei et al. \cite{lei2021eigenvalues} obtained an upper bound for the $\mathcal{E}$-energy of graphs and characterized the extremal graphs. Very recently, Mahato and Kannan \cite{mahato2022minimizers} studied the minimization problem for the $\mathcal{E}$-energy of trees and characterized the trees with minimum $\mathcal{E}$-energy among all trees on $n$ vertices. For more details about the $\mathcal{E}$-energy of graphs, we refer to \cite{mahato2021spectral,mahato2022eccentricity,patel2021energy}.

Motivated by the above-mentioned works, in this article, we study the extremal problems for eccentricity matrices of complements of trees and characterize the extremal graphs for the $\mathcal{E}$-spectral radius, second largest $\mathcal{E}$-eigenvalue, least $\mathcal{E}$-eigenvalue and $\mathcal{E}$-energy among the complements of trees. The extremal problems for the complements of trees with respect to the other graph matrices have been studied in \cite{Fan-least-comple,Li-leastsign-comple,Li-Distsign-comple,Lin-Dist-comple}. For a tree $T$, let $T^c$ be the complement of $T$. Throughout the paper, we always assume that $T^c$ is connected; hence, $T$ is not isomorphic to the star graph. Let $\mathcal{T}_n^c$ denote the complements of all trees on $n$ vertices. Note that if $T$ is a tree with $\diam(T)>3$, then $\mathcal{E}(T^c)=2A(T)$. If $\diam(T)=3$, then $\mathcal{E}(T^c)\geq 2A(T)$ entrywise. Since $A(T)$ is irreducible, therefore $\mathcal{E}(T^c)$ is also irreducible.

The article is organized as follows: In section $2$, we collect needed notations and  some preliminary results. In section $3$, we characterize the extremal graphs with the minimum and maximum $\mathcal{E}$-spectral radius among the complements of trees. As a consequence, we determine the unique graphs with the minimum and maximum least $\mathcal{E}$-eigenvalues among the complements of trees. We discuss the extremal problems for the second largest $\mathcal{E}$-eigenvalue and the $\mathcal{E}$-energy of complements of trees in section $4$ and section $5$, respectively.

\section{Preliminaries}
In this section, we introduce some notations and collect some preliminary results, which will be used in the subsequent sections.
First, we define the quotient matrix and equitable partition.
\begin{dfn}[Equitable partition \cite{brou-haem-book}] Let $A$ be a real symmetric matrix whose rows and columns are indexed by $X=\{1,2,\hdots,n\}$. Let $\Pi=\{X_1,X_2,\hdots,X_m\}$ be a partition of $X$. The characteristic matrix $C$ is the $n\times m$ matrix whose $j$-th column is the characteristic vector of $X_j$ $(j=1,2,\hdots,m)$. Let $A$ be partitioned according to $\Pi$ as \[A=\left[ {\begin{array}{cccc}
			A_{11} & A_{12} &\hdots & A_{1m}\\
			A_{21} & A_{22} &\hdots & A_{2m}\\
			\vdots &\hdots & \ddots & \vdots\\
			A_{m1} & A_{m2}& \hdots &A_{mm}\\
	\end{array} } \right],\]
	where $A_{ij}$ denotes the submatrix (block) of $A$ formed by rows in $X_i$ and the columns in $X_j$. If $q_{ij}$ denotes the average row sum of $A_{ij}$, then the matrix $Q=(q_{ij})$ is called the quotient matrix of $A$. If the row sum of each block $A_{ij}$ is a constant, then the partition $\Pi$ is called \emph{equitable partition}.
\end{dfn}

In the following theorem, we state a well-known result about the spectrum of a quotient matrix corresponding to an equitable partition.

\begin{thm}[\cite{brou-haem-book}]\label{quo-spec}
	Let $Q$ be a quotient matrix of any square matrix $A$ corresponding to an equitable partition. Then the spectrum of $A$ contains the spectrum of $Q$.
\end{thm}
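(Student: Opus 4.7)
The plan is to introduce the characteristic matrix $C$ of the equitable partition $\Pi$ as the bridge between $A$ and $Q$, and to establish the intertwining identity $AC = CQ$. The key calculation, which I would carry out first, is short: for any $v \in X_i$, the $(v,j)$-entry of $AC$ equals the sum of $A_{vu}$ over $u \in X_j$, which by the equitable partition hypothesis is the constant row sum $q_{ij}$ of the block $A_{ij}$; and this matches the $(v,j)$-entry of $CQ$ since the $v$-th row of $C$ picks out the $i$-th row of $Q$.

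With the intertwining $AC = CQ$ in hand, the eigenvalue transfer is automatic. Given any eigenpair $(\lambda, w)$ of $Q$ with $w \neq 0$, I would compute $A(Cw) = (AC)w = (CQ)w = \lambda(Cw)$, which shows $\lambda \in \Spec(A)$ with eigenvector $Cw$, provided $Cw \neq 0$. The nonvanishing follows because the columns of $C$ are characteristic vectors of pairwise disjoint nonempty sets $X_1, \ldots, X_m$ and hence are linearly independent, so $C$ has trivial kernel.

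I do not foresee a real obstacle; the argument is essentially a two-line matrix identity supported by one standard observation. The only ingredient that deserves care is the linear independence of the columns of $C$, since this is precisely what guarantees that no eigenvector of $Q$ is lost in passing to $A$, and consequently that the entire spectrum of $Q$ appears in that of $A$.
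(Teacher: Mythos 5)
Your argument is correct and is the standard textbook proof: the paper itself gives no proof of this statement (it is quoted from Brouwer--Haemers), and the intertwining identity $AC=CQ$ combined with the injectivity of the characteristic matrix $C$ is precisely the argument given there. The only refinement worth noting is that if ``contains'' is read with multiplicities, one extends the columns of $C$ to a basis of $\mathbb{R}^n$ so that $A$ becomes block upper-triangular with $Q$ as a diagonal block, whence the characteristic polynomial of $Q$ divides that of $A$; for containment as a set of eigenvalues, which is all the paper uses, your proof is complete.
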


\begin{figure}[h!]
	\centering
	\includegraphics[scale= 0.90]{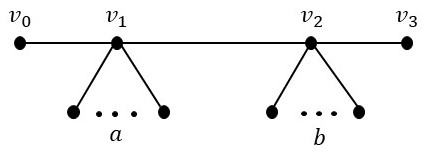}
	\caption{The tree $T_{n,3}^{a,b}$, where $a+b=n-4$.}
	\label{fig1}
\end{figure}

\begin{figure}[h!]
	\centering
	\includegraphics[scale= 0.80]{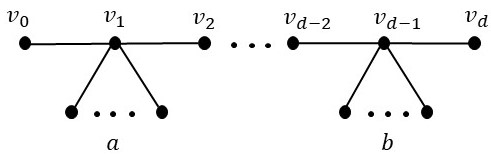}
	\caption{The tree $D_{n,d}^{a,b}$, where $a+b=n-d-1$ and $d\geq 4$.}
	\label{fig2}
\end{figure}

Let $K_n$, $P_n$, and $K_{1,n-1}$ denote the complete graph, the path, and the star on $n$ vertices, respectively. For $d=3$, let $T_{n,d}^{a,b}$ be the tree obtained from $P_4=v_0v_1v_2v_3$ by attaching $a$ pendant vertices to $v_1$ and $b$ pendent vertices to $v_2$, where $a+b=n-4$ and $b\geq a\geq 0$. For $d\geq 4$, let $D_{n,d}^{a,b}$ be the tree obtained from $P_{d+1}=v_0v_1v_2\hdots v_d$ by attaching $a$ pendant vertices to $v_1$ and $b$ pendent vertices to $v_{d-1}$, where $a+b=n-d-1$ and $b\geq a\geq 0$. The trees $T_{n,3}^{a,b}$ and $D_{n,d}^{a,b}$ are depicted in Figure \ref{fig1} and Figure \ref{fig2}, respectively. For a real number $x$, let $\lfloor x \rfloor$ denote the greatest integer less than or equal to $x$, and $\lceil x \rceil$ denote the least integer greater than or equal to $x$. In  the following lemma, we compute the $\mathcal{E}$-spectrum of the complement of the tree $T_{n,3}^{a,b}$, where $a+b=n-4$ and $b\geq a\geq 0$.

\begin{lem}\label{spec-tpq}
	For $b\geq a\geq 0$ with $a+b=n-4$, the $\mathcal{E}$-spectrum of $(T_{n,3}^{a,b})^c$ is given by
	\[ \Spec_{\mathcal{E}}\big((T_{n,3}^{a,b})^c\big)=
	\left\{ {\begin{array}{ccc}
			-\sqrt{\frac{4n+1\pm \sqrt{(4n+1)^2-64(a+1)(b+1)}}{2}} & 0  &\sqrt{\frac{4n+1\pm \sqrt{(4n+1)^2-64(a+1)(b+1)}}{2}} \\
			1 & n-4 & 1 \\
	\end{array} } \right\}.\]
\end{lem}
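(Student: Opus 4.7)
The plan is to compute $\mathcal{E}((T_{n,3}^{a,b})^c)$ explicitly, exhibit an equitable partition, and diagonalise the resulting quotient matrix. Label the vertices of $T=T_{n,3}^{a,b}$ by $v_0,v_1,v_2,v_3$ (the central path) together with pendants $u_1,\dots,u_a$ attached to $v_1$ and $w_1,\dots,w_b$ attached to $v_2$. First I would work out the eccentricity structure in $T^c$. A direct check of the neighborhoods $N_T(v_1)=\{v_0,v_2,u_1,\dots,u_a\}$ and $N_T(v_2)=\{v_1,v_3,w_1,\dots,w_b\}$ shows that $N_T(v_1)\cup N_T(v_2)\cup\{v_1,v_2\}$ exhausts $V(T)$, so $v_1$ and $v_2$ have no common neighbour in $T^c$; a length-$3$ path $v_1v_3v_0v_2$ in $T^c$ then gives $d_{T^c}(v_1,v_2)=3$. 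All other pairs turn out to be at distance $\le 2$, yielding $e_{T^c}(v_1)=e_{T^c}(v_2)=3$ and $e_{T^c}(x)=2$ for every other vertex $x$.

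From the definition of $\mathcal{E}$, the only nonzero entries are therefore $\mathcal{E}(v_0,v_1)=\mathcal{E}(v_2,v_3)=2$, $\mathcal{E}(v_1,v_2)=3$, $\mathcal{E}(v_1,u_i)=2$ for all $i$, and $\mathcal{E}(v_2,w_j)=2$ for all $j$ (together with their symmetric counterparts). Then I would observe that the partition $\Pi=\{\{v_0\},\{v_1\},\{v_2\},\{v_3\},\{u_1,\dots,u_a\},\{w_1,\dots,w_b\}\}$ is equitable for $\mathcal{E}((T_{n,3}^{a,b})^c)$, with quotient matrix
\[
Q=\begin{pmatrix}
0 & 2 & 0 & 0 & 0 & 0\\
2 & 0 & 3 & 0 & 2a & 0\\
0 & 3 & 0 & 2 & 0 & 2b\\
0 & 0 & 2 & 0 & 0 & 0\\
0 & 2 & 0 & 0 & 0 & 0\\
0 & 0 & 2 & 0 & 0 & 0
\end{pmatrix}.
\]
For any vector $x$ that is constantly $0$ on $v_0,v_1,v_2,v_3$ and sums to $0$ on each of the classes $\{u_i\}$ and $\{w_j\}$, a direct computation shows $\mathcal{E}x=0$; this accounts for $n-6$ independent eigenvectors with eigenvalue $0$. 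By Theorem \ref{quo-spec}, the remaining six eigenvalues are those of $Q$.

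To diagonalise $Q$, I first note that its first and fifth rows coincide, as do its fourth and sixth, so $Q$ is singular with a two-dimensional cokernel; this gives a further two zero eigenvalues, so the total multiplicity of $0$ in $\Spec_{\mathcal{E}}((T_{n,3}^{a,b})^c)$ is $(n-6)+2=n-4$, matching the claim. For the four nonzero eigenvalues I solve $Qx=\lambda x$ componentwise: rows $1,5$ force $x_0=x_u=2x_1/\lambda$, and rows $4,6$ force $x_3=x_w=2x_2/\lambda$. Substituting into rows $2$ and $3$ yields
\[
\bigl(\lambda^2-4(a+1)\bigr)x_1=3\lambda\,x_2,\qquad \bigl(\lambda^2-4(b+1)\bigr)x_2=3\lambda\,x_1,
\]
whose compatibility condition, after setting $\mu=\lambda^2$ and using $a+b=n-4$, simplifies to $\mu^2-(4n+1)\mu+16(a+1)(b+1)=0$. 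The two roots $\mu_{\pm}=\tfrac12\bigl(4n+1\pm\sqrt{(4n+1)^2-64(a+1)(b+1)}\bigr)$ are nonnegative, and each yields a pair $\pm\sqrt{\mu_{\pm}}$ of eigenvalues, giving exactly the four nonzero $\mathcal{E}$-eigenvalues listed in the statement.

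The main obstacle I anticipate is the careful verification of Step 1, namely pinpointing $d_{T^c}(v_1,v_2)=3$ together with $e_{T^c}(v_1)=e_{T^c}(v_2)=3$, since this is the only place the distinctive value $3$ enters the eccentricity matrix and it relies on the diameter-$3$ hypothesis (which is why this case does not reduce to $\mathcal{E}(T^c)=2A(T)$). Beyond that, one should also briefly address the degenerate case $a=0$, where the $u$-class is empty and the partition becomes five-block, but the same elimination yields the identical quadratic in $\mu$ (with $(a+1)(b+1)=b+1$), so the displayed spectrum remains valid.
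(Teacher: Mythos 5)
Your proof is correct and follows essentially the same route as the paper: write down $\mathcal{E}\big((T_{n,3}^{a,b})^c\big)$ explicitly and apply Theorem \ref{quo-spec} to an equitable partition. The only difference is that the paper uses the coarser four-cell partition $\{u_1,\dots,u_a,v_0\}\cup\{v_1\}\cup\{v_2\}\cup\{v_3,w_1,\dots,w_b\}$, whose $4\times 4$ quotient carries exactly the four nonzero eigenvalues (the multiplicity $n-4$ of $0$ being read off from the rank of the full matrix), whereas your finer six-cell partition requires the extra bookkeeping for the two additional zero eigenvalues of the $6\times 6$ quotient; both reduce to the same quadratic in $\lambda^2$.
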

\begin{proof}
	Let $T_{n,3}^{a,b}$ be the tree obtained from $P_4=v_0v_1v_2v_3$ by attaching $a$ pendant vertices $u_1,\hdots,u_a$ to $v_1$ and $b$ pendent vertices $w_1,\hdots,w_b$ to $v_2$, where $a+b=n-4$ and $b\geq a\geq 0$. Then the eccentricity matrix of $(T_{n,3}^{a,b})^c$ is given by
	\[ \mathcal{E}\big((T_{n,3}^{a,b})^c\big)=
	\begin{blockarray}{ccccccccccc}
		& u_1 & \hdots & u_a & v_0 & v_1 & v_2 & v_3 & w_1 & \hdots & w_b \\
		\begin{block}{c(cccccccccc)}
			u_1 & 0 & \hdots & 0 & 0 & 2 & 0 & 0 & 0 & \hdots & 0\\
			\vdots & \vdots & \ddots & \vdots & \vdots & \vdots & \vdots & \vdots & \vdots & \ddots & \vdots \\
			u_a & 0 & \hdots & 0 & 0 & 2 & 0 & 0 & 0 & \hdots & 0\\
			v_0 & 0 & \hdots & 0 & 0 & 2 & 0 & 0 & 0 & \hdots & 0\\
			v_1 & 2 & \hdots & 2 & 2 & 0 & 3 & 0 & 0 & \hdots & 0\\
			v_2 & 0 & \hdots & 0 & 0 & 3 & 0 & 2 & 2 & \hdots & 2\\
			v_3 & 0 & \hdots & 0 & 0 & 0 & 2 & 0 & 0 & \hdots & 0\\
			w_1 & 0 & \hdots & 0 & 0 & 0 & 2 & 0 & 0 & \hdots & 0\\
			\vdots & \vdots & \ddots & \vdots & \vdots & \vdots & \vdots & \vdots & \vdots & \ddots & \vdots \\
			w_b & 0 & \hdots & 0 & 0 & 0 & 2 & 0 & 0 & \hdots & 0\\
		\end{block}
	\end{blockarray}~.\]
	
	It is easy to see that the rank of $\mathcal{E}\big((T_{n,3}^{a,b})^c\big)$ is $4$. Therefore, $0$ is an eigenvalue of $\mathcal{E}\big((T_{n,3}^{a,b})^c\big)$ with multiplicity $n-4$. 
	
	If $U=\{u_1,\hdots,u_a,v_0\}$ and $W=\{v_3,w_1,\hdots,w_b\}$, then $\Pi_1=U \cup \{v_1\}\cup \{v_2\}\cup W$ is an equitable partition of $\mathcal{E}\big((T_{n,3}^{a,b})^c\big)$ with the quotient matrix
	
	\[Q_1=
	\left( {\begin{array}{cccc}
			0 & 2 & 0 & 0 \\
			2(a+1) & 0 & 3 & 0 \\
			0 & 3 & 0 & 2(b+1) \\
			0 & 0 & 2 & 0 \\
	\end{array} } \right).\]
	By a direct calculation, the eigenvalues of $Q_1$ are
	\begin{align*}
		& \pm \sqrt{\frac{4a+4b+17+\sqrt{(4a+4b+17)^2-64(a+1)(b+1)}}{2}} \quad \text{and} \\
		& \pm \sqrt{\frac{4a+4b+17-\sqrt{(4a+4b+17)^2-64(a+1)(b+1)}}{2}}.    
	\end{align*}
	Now, the proof follows from Theorem \ref{quo-spec} by substituting $a+b=n-4$.
\end{proof}

In the following theorem, we find the adjacency energy of the path graph on $n$ vertices. We include proof of this result for the sake of completeness.

\begin{thm}\label{energy=path}
	The energy of a path $P_n$ on $n$ vertices is given by 
	\begin{align*}
		E_A(P_n)=\begin{cases}
			\text{$ 2\Big(\cot\big({\frac{\pi}{2(n+1)}}\big)-1\Big)$} & \quad\text{if $n$ is odd,}\\
			\text{$2\Big(\csc\big({\frac{\pi}{2(n+1)}}\big)-1\Big)$} & \quad\text{if $n$ is even.}
		\end{cases} 
	\end{align*}
\end{thm}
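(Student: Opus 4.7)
The plan is to quote the classical formula $\lambda_k(P_n) = 2\cos\frac{k\pi}{n+1}$ for $k=1,\ldots,n$, set $\theta = \frac{\pi}{2(n+1)}$ so that $\lambda_k = 2\cos(2k\theta)$, and compute $E_A(P_n) = 2\sum_{k=1}^{n}|\cos(2k\theta)|$ via a telescoping trigonometric identity. The parity of $n$ will force a case split.

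First I would exploit the symmetry $\cos(2(n+1-k)\theta) = \cos(\pi - 2k\theta) = -\cos(2k\theta)$ to pair indices $k$ and $n+1-k$, which carry equal absolute value and opposite signs. For $n$ odd the midpoint index $k=(n+1)/2$ produces the zero eigenvalue $\cos(\pi/2)=0$, while for $n$ even no midpoint exists. In both cases the folded sum reads
\[
E_A(P_n) \;=\; 4\sum_{k=1}^{m}\cos(2k\theta),
\]
with $m=(n-1)/2$ if $n$ is odd and $m=n/2$ if $n$ is even.

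The inner sum is then evaluated by the telescoping identity $2\sin\theta\cos(2k\theta) = \sin((2k+1)\theta) - \sin((2k-1)\theta)$, which collapses to
\[
\sum_{k=1}^{m}\cos(2k\theta) \;=\; \frac{\sin((2m+1)\theta)-\sin\theta}{2\sin\theta}.
\]
For odd $n$ one has $(2m+1)\theta = n\theta = \tfrac{\pi}{2}-\theta$, so $\sin((2m+1)\theta) = \cos\theta$ and the sum becomes $\tfrac{1}{2}(\cot\theta-1)$; for even $n$ one has $(2m+1)\theta = (n+1)\theta = \tfrac{\pi}{2}$, so $\sin((2m+1)\theta)=1$ and the sum becomes $\tfrac{1}{2}(\csc\theta-1)$. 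Multiplying by $4$ delivers the two displayed formulas. The argument is essentially mechanical once the eigenvalues are in hand; the only delicate point is the treatment of the midpoint of the pairing, which is precisely where the $\cot$ versus $\csc$ dichotomy originates.
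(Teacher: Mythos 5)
Your proposal is correct and follows essentially the same route as the paper: both start from the eigenvalues $2\cos\frac{k\pi}{n+1}$, use the sign symmetry to reduce the energy to four times the sum of the positive cosines, and then evaluate that finite cosine sum in closed form, splitting on the parity of $n$. The only cosmetic difference is that you derive the cosine sum via the telescoping identity $2\sin\theta\cos(2k\theta)=\sin((2k+1)\theta)-\sin((2k-1)\theta)$, whereas the paper quotes the equivalent closed-form identity $\sum_{k=1}^{m}\cos(kx)=\sin\bigl(\tfrac{mx}{2}\bigr)\csc\bigl(\tfrac{x}{2}\bigr)\cos\bigl(\tfrac{(m+1)x}{2}\bigr)$ from the literature.
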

\begin{proof}
	We know that the eigenvalues of $P_n$ are $2\cos{\frac{\pi k}{n+1}}$, $k=1,2,\hdots,n$. By using the formula $\cos{x}+\cos{2x}+\hdots+\cos{nx}=\sin \big(\frac{nx}{2}\big)\csc \big(\frac{x}{2}\big)\cos \big(\frac{(n+1)x}{2}\big)$(for a proof of this identity, we refer to \cite{knapp2009sines}), we have
	\begin{align*}
		E_A(P_n)=&~\begin{cases}
			\text{$ 4\Big(\cos\big({\frac{\pi}{n+1}}\big)+\cos\big({\frac{2\pi}{n+1}}\big)+\hdots+\cos\big({\frac{(n-1)\pi}{2(n+1)}}\big)\Big)$} & \quad\text{if $n$ is odd,}\\
			\text{$4\Big(\cos\big({\frac{\pi}{n+1}}\big)+\cos\big({\frac{2\pi}{n+1}}\big)+\hdots+\cos\big({\frac{n\pi}{2(n+1)}}\big)\Big)$} & \quad\text{if $n$ is even;}
		\end{cases} \\
		=&~\begin{cases}
			\text{$ 4\sin\big({\frac{(n-1)\pi}{4(n+1)}}\big)\csc\big({\frac{\pi}{2(n+1)}}\big)\cos\big({\frac{\pi}{4}}\big)$} & \quad\text{if $n$ is odd,}\\
			\text{$4\sin\big({\frac{n\pi}{4(n+1)}}\big)\csc\big({\frac{(n+2)\pi}{4(n+1)}}\big)\cos\big({\frac{\pi}{4}}\big)$} & \quad\text{if $n$ is even;}
		\end{cases}\\
		=&~\begin{cases}
			\text{$ 2\Big(\cos\big({\frac{\pi}{2(n+1)}}\big)-\sin \big({\frac{\pi}{2(n+1)}}\big)\Big)\csc\big({\frac{\pi}{2(n+1)}}\big)$} & \quad\text{if $n$ is odd,}\\
			\text{$2\Big(\sin \big(\frac{\pi}{2}\big)-\sin \big({\frac{\pi}{2(n+1)}}\big)\Big)\csc\big({\frac{\pi}{2(n+1)}}\big)$} & \quad\text{if $n$ is even;}
		\end{cases}\\
		=&~\begin{cases}
			\text{$ 2\Big(\cot\big({\frac{\pi}{2(n+1)}}\big)-1\Big)$} & \quad\text{if $n$ is odd,}\\
			\text{$2\Big(\csc\big({\frac{\pi}{2(n+1)}}\big)-1\Big)$} & \quad\text{if $n$ is even.}
		\end{cases} 
	\end{align*}
\end{proof}

Now, we collect some results related to the spectral radius, second-largest eigenvalue, and energy for the adjacency matrices of trees. 

\begin{thm}[\cite{lovasz1973eigenvalues}]\label{spectral-min}
	Let $T$ be a tree on $n$ vertices. Then $\lambda_1(T)\geq 2\cos{\frac{\pi}{n+1}}$ with equality if and only if $T\cong P_n$.
\end{thm}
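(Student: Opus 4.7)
The plan is to split the proof into two parts: (i) compute $\lambda_1(P_n)$ explicitly, and (ii) show that $P_n$ is the unique minimizer of $\lambda_1$ among trees on $n$ vertices. Part (i) is routine: $A(P_n)$ is the symmetric tridiagonal $(0,1)$-matrix, and a direct substitution shows that the vectors $x^{(k)}$ with entries $x^{(k)}_j = \sin\frac{\pi k j}{n+1}$ ($j=1,\dots,n$) satisfy $A(P_n)x^{(k)} = 2\cos\frac{\pi k}{n+1}\, x^{(k)}$ for $k=1,\dots,n$. In particular, $\lambda_1(P_n)=2\cos\frac{\pi}{n+1}$. Thus it remains to prove that $\lambda_1(T) > 2\cos\frac{\pi}{n+1}$ whenever $T$ is a tree on $n$ vertices with $T \not\cong P_n$.

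For (ii), I would employ a branch-shifting (grafting) argument. Since $T \not\cong P_n$, some vertex $v$ on a longest path $P = v_0 v_1 \cdots v_d$ of $T$ has a neighbor $u \notin P$. Let $S$ be the subtree of $T - vu$ containing $u$, and define $T'$ by deleting $vu$ and adding $v_d u$ (so $S$ is re-attached at the endpoint $v_d$). Note that $T'$ is a tree on $n$ vertices whose longest path is strictly longer than that of $T$, so iterating the shift a finite number of times produces $P_n$. The key claim is $\lambda_1(T) > \lambda_1(T')$. To prove it, let $z$ be the Perron eigenvector of $A(T')$ normalized with $\|z\|=1$. Then
\[
\lambda_1(T) \;\geq\; z^{T} A(T) z \;=\; z^{T} A(T') z + 2 z_u \bigl(z_v - z_{v_d}\bigr) \;=\; \lambda_1(T') + 2 z_u \bigl(z_v - z_{v_d}\bigr),
\]
so, since $z_u > 0$ by Perron--Frobenius, $\lambda_1(T) > \lambda_1(T')$ as soon as $z_v > z_{v_d}$. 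Chaining these strict decreases along the reduction to $P_n$ yields $\lambda_1(T) > \lambda_1(P_n)$ for every $T \not\cong P_n$.

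The main obstacle is verifying the strict inequality $z_v > z_{v_d}$. Intuitively $v$ sits more centrally in $T'$ than the formerly pendant vertex $v_d$, which now lies on a long pendant path of $T'$, so its Perron entry is smaller. Rigorously, one can propagate the inequality along the pendant path of $T'$ ending at the extremal pendant vertex, by induction on the distance and using the eigenvector equation $\lambda_1(T') z_w = \sum_{w' \sim w} z_{w'}$ together with the strict positivity of $z$. A cleaner, polynomial-based route is to invoke Godsil's theorem that the characteristic polynomial of a tree equals its matching polynomial, and then compare the matching polynomials of $T$ and $T'$ term-by-term; the transfer of a subtree toward the end of a longest path is well known to strictly decrease the largest root of the matching polynomial, which by Godsil's identification delivers $\lambda_1(T') < \lambda_1(T)$ directly.
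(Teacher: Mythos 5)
The paper does not prove this statement at all --- it is quoted from the literature (L\'ov\'asz--Pelik\'an) with a citation --- so your proposal has to stand on its own, and it has a genuine gap at exactly the point you flag as ``the main obstacle.'' The computation of $\lambda_1(P_n)$ and the overall reduction scheme (each shift strictly increases the length of a longest path, so finitely many shifts reach $P_n$) are fine. The problem is that the key claim $z_v > z_{v_d}$ is simply false in general. Take $T$ to be the $5$-vertex spider with legs of lengths $1,1,2$: a longest path is $a\,c\,d_1\,d_2$, the off-path neighbor of $v=c$ is the leaf $b$, and your shift produces $T'=P_5$ with vertex order $a,c,d_1,d_2,b$. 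The Perron vector of $P_5$ has entries $\sin(j\pi/6)$, so $z_v=z_c=\sin(2\pi/6)=\sin(4\pi/6)=z_{d_2}=z_{v_d}$; the correction term $2z_u(z_v-z_{v_d})$ vanishes and your chain of strict inequalities breaks. (The conclusion $\lambda_1(T)>\lambda_1(T')$ is still true here, but only because $z$ fails to be a $\lambda_1(T)$-eigenvector of $A(T)$, so the Rayleigh bound $\lambda_1(T)\geq z^{T}A(T)z$ is itself strict --- an observation your argument does not make.) Worse, when the transplanted subtree $S$ is large (e.g.\ a big star hanging from the middle of the longest path), the Perron mass of $T'$ concentrates near $v_d$ and one expects $z_v<z_{v_d}$, in which case the Rayleigh comparison with the Perron vector of $T'$ gives nothing at all. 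So the proposed ``induction along the pendant path'' is aimed at proving an inequality that is not true, and the single big transplant you use is not the right elementary move.

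Your two suggested repairs do not close this. The matching-polynomial route invokes, as ``well known,'' precisely the monotonicity statement that needs proof; citing it is circular in spirit. The standard way to make this argument rigorous is to replace your one-shot transplant by a sequence of smaller moves for which the spectral radius is provably monotone --- for instance the Li--Feng grafting lemma (if $G_u(k,\ell)$ is $G$ with two pendant paths of lengths $k\geq \ell\geq 1$ at $u$, then $\lambda_1(G_u(k,\ell))>\lambda_1(G_u(k+1,\ell-1))$), whose proof carefully compares Perron entries at the two relevant vertices and handles the equality case, or an interlacing/characteristic-polynomial recursion for $\phi(T,x)$ under edge subdivision and pendant-path balancing. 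As written, your proof establishes the value $\lambda_1(P_n)=2\cos\frac{\pi}{n+1}$ and a correct reduction strategy, but the decisive lemma $\lambda_1(T)>\lambda_1(T')$ is not proved.
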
 

\begin{thm}[{\cite[Theorem 2]{hofmeister1997two}}]\label{spectral-max}
	Let $T$ be a tree on $n\geq 4$ vertices and $T\ncong K_{1,n-1}$. Then $\lambda_1(T)\leq \sqrt{\frac{n-1+\sqrt{n^2-6n+13}}{2}}$ with equality if and only if $T\cong T_{n,3}^{0,n-4}$.
\end{thm}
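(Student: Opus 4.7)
My plan has two parts. First, I would compute $\lambda_1(T_{n,3}^{0,n-4})$ to identify the claimed bound. Second, I would show that every other tree on $n$ vertices, apart from $K_{1,n-1}$, has strictly smaller spectral radius. Observe that $T_{n,3}^{0,n-4}$ is the double star formed from the edge $v_1v_2$ by attaching the single leaf $v_0$ to $v_1$ and the $n-3$ leaves $v_3,w_1,\ldots,w_{n-4}$ to $v_2$.

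For the first part I would use the equitable partition $\{v_0\},\{v_1\},\{v_2\},\{v_3,w_1,\ldots,w_{n-4}\}$ of $A(T_{n,3}^{0,n-4})$, which has quotient matrix
\[
Q=\begin{pmatrix}0&1&0&0\\ 1&0&1&0\\ 0&1&0&n-3\\ 0&0&1&0\end{pmatrix}.
\]
A direct cofactor expansion yields $\det(\mu I-Q)=\mu^4-(n-1)\mu^2+(n-3)$; setting $y=\mu^2$ and solving $y^2-(n-1)y+(n-3)=0$ identifies the largest root of $Q$ as $\sqrt{(n-1+\sqrt{n^2-6n+13})/2}$. By Theorem \ref{quo-spec} together with the simplicity of the Perron eigenvalue of a connected graph, this equals $\lambda_1(T_{n,3}^{0,n-4})$.

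For the second part I would organise by $\diam(T)\ge 3$. If $\diam(T)=3$, then $T$ is a double star $D(a,b)$ with $a+b=n-2$ and $1\le a\le b$; the analogous equitable partition produces $\lambda_1(D(a,b))^2=(n-1+\sqrt{(n-1)^2-4ab})/2$, strictly decreasing in $ab$. The constraint $a+b=n-2$ with $a\ge 1$ forces $ab\ge n-3$, with equality if and only if $(a,b)=(1,n-3)$, so $T_{n,3}^{0,n-4}$ is the unique maximizer in this class.

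The main obstacle is the case $\diam(T)\ge 4$, which I would reduce to the previous case by a Perron-vector-driven edge-grafting: pick a diametrical path $v_0v_1\ldots v_d$ and a Perron eigenvector $x>0$ of $A(T)$, assuming (after reversing the path if needed) $x_{v_1}\ge x_{v_{d-1}}$. Form $T'$ by replacing every edge $v_{d-1}u$ with $u\ne v_{d-2}$ by the edge $v_1u$; then $T'$ is a tree of order $n$ with $\diam(T')\le d-1$, and $T'\ncong K_{1,n-1}$ since $v_2$ retains both neighbours $v_1$ and $v_3$. A short Rayleigh-quotient calculation yields
\[
\frac{x^\top A(T')x}{x^\top x}=\lambda_1(T)+\frac{2(x_{v_1}-x_{v_{d-1}})\sum_u x_u}{\|x\|^2}\ge\lambda_1(T),
\]
where the sum is over the moved neighbours of $v_{d-1}$. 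Strictness is immediate if $x_{v_1}>x_{v_{d-1}}$; in the equality case $x_{v_1}=x_{v_{d-1}}$, it follows from the observation $(A(T')x)_{v_1}=\lambda_1(T)x_{v_1}+\sum_u x_u>\lambda_1(T)x_{v_1}$, which rules out $x$ being a Perron eigenvector of $T'$ with eigenvalue $\lambda_1(T)$, forcing $\lambda_1(T')>\lambda_1(T)$. Iterating this transformation reduces $\diam(T)$ to $3$ while strictly increasing $\lambda_1$, and the diameter-$3$ case then concludes the proof. The subtle point is the strictness at the step where the two Perron-entries tie, which is exactly where the eigenvector argument above is essential.
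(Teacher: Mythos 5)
The paper offers no proof of this statement at all --- it is imported verbatim from Hofmeister (Theorem 2 of the cited reference) --- so there is no in-paper argument to compare against. Your proposal is a legitimate self-contained proof along the standard lines: the quotient-matrix computation giving $\mu^4-(n-1)\mu^2+(n-3)$ and hence $\lambda_1(T_{n,3}^{0,n-4})=\sqrt{(n-1+\sqrt{n^2-6n+13})/2}$ is correct; the double-star formula $\lambda_1(D(a,b))^2=(n-1+\sqrt{(n-1)^2-4ab})/2$ together with $ab\ge n-3$ (forced by $a\ge 1$, $a+b=n-2$, with equality only for $\{a,b\}=\{1,n-3\}$) correctly settles the diameter-$3$ case; and the Perron-vector shifting step, including the eigenvector argument for strictness when $x_{v_1}=x_{v_{d-1}}$, is sound (note that all neighbours of $v_{d-1}$ other than $v_{d-2}$ are indeed leaves, since the path is diametrical, so $T'$ is a tree and no multi-edges arise).

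The one genuine flaw is the intermediate claim that $\diam(T')\le d-1$. This is false in general: take the spider with three legs of length $2$ (so $n=7$, $d=4$); shifting the single leaf beyond $v_{d-1}$ on one diametrical path over to $v_1$ leaves the third leg of length $2$ untouched, and $\diam(T')=4=d$. Since your termination argument for the iteration rests on the diameter dropping at each step, it does not stand as written --- but the repair is immediate. In each step $v_{d-1}$ loses all neighbours except $v_{d-2}$ and becomes a leaf, while no other vertex loses a neighbour, so the number of internal (non-leaf) vertices strictly decreases by exactly one per step; a tree with exactly two internal vertices is a double star, and your observation that $v_1$ and $v_{d-2}$ both remain internal in $T'$ guarantees the star is never reached. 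Hence the iteration terminates at a double star after finitely many steps with $\lambda_1$ strictly increased throughout, and the diameter-$3$ analysis finishes the proof exactly as you describe.
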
 

\begin{thm}[{\cite[Theorem 2]{yuan1989sharp}}]\label{sec-larg-min}
	Let $T$ be a tree of order $n\geq 4$ and $T\ncong K_{1,n-1}, T_{n,3}^{0,n-4}$. Then $\lambda_2(T)\geq 1$.
\end{thm}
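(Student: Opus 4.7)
The plan is a case split on $d := \diam(T)$. Because $T \not\cong K_{1,n-1}$, the diameter is at least $3$, leaving exactly two cases: $d \geq 4$ and $d = 3$.

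For $d \geq 4$, the idea is to find a convenient induced subgraph and invoke Cauchy interlacing. Any diametrical path in a tree is automatically induced (trees have no chords), so $T$ contains an induced copy of $P_5$. Since the eigenvalues of $A(P_5)$ are $\{\pm\sqrt{3}, \pm 1, 0\}$, in particular $\lambda_2(P_5) = 1$, the principal-submatrix version of Cauchy interlacing applied to $A(T)$ and the $5 \times 5$ block $A(P_5)$ gives $\lambda_2(T) \geq \lambda_2(P_5) = 1$. This step is essentially automatic.

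The harder case is $d = 3$, where $T$ is necessarily a double star and therefore coincides with some $T_{n,3}^{a,b}$ with $a + b = n - 4$ and $0 \leq a \leq b$. The hypothesis $T \not\cong T_{n,3}^{0,n-4}$ forces $a \geq 1$, and hence $b \geq 1$. Here I would use the equitable partition $\{v_0, u_1, \ldots, u_a\} \cup \{v_1\} \cup \{v_2\} \cup \{v_3, w_1, \ldots, w_b\}$ of $V(T)$, which reduces the spectral question to the $4 \times 4$ quotient matrix
\[
Q \;=\; \begin{pmatrix} 0 & 1 & 0 & 0 \\ a+1 & 0 & 1 & 0 \\ 0 & 1 & 0 & b+1 \\ 0 & 0 & 1 & 0 \end{pmatrix},
\]
whose characteristic polynomial is the biquadratic $x^4 - (a+b+3)x^2 + (a+1)(b+1)$. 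Writing $y = x^2$ and letting $y_{\pm} = \tfrac{1}{2}\big((a+b+3) \pm \sqrt{(a+b+3)^2 - 4(a+1)(b+1)}\big)$, Theorem \ref{quo-spec} tells us that $\pm\sqrt{y_{\pm}}$ are eigenvalues of $A(T)$. Since $0 < y_- < y_+$, we get $\lambda_2(T) \geq \sqrt{y_-}$, and a short squaring and rearrangement reduces the target inequality $\sqrt{y_-} \geq 1$ to $(a+1)(b+1) \geq a+b+2$, i.e.\ $ab \geq 1$, which holds since $a, b \geq 1$.

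The main obstacle I foresee is the bookkeeping in the double-star case: correctly setting up the equitable partition, evaluating the $4 \times 4$ determinant, and identifying $\sqrt{y_-}$ (rather than $\sqrt{y_+}$, which will turn out to equal $\lambda_1(T)$) as the quantity bounding $\lambda_2(T)$ from below. The diameter-$\geq 4$ case, by contrast, is a routine interlacing exercise once one observes that diametrical paths in trees are induced.
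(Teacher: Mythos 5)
Your proof is correct. Note that the paper itself gives no argument for this statement: it is imported by citation from Yuan's 1989 paper (Theorem 2 of \cite{yuan1989sharp}), so there is no internal proof to compare against. Your two-case argument is a valid self-contained derivation. In the case $\diam(T)\geq 4$, a diametrical path supplies an induced $P_5$, whose adjacency spectrum $\{\pm\sqrt{3},\pm 1,0\}$ combined with Cauchy interlacing for principal submatrices gives $\lambda_2(T)\geq 1$ immediately. In the case $\diam(T)=3$, the tree is indeed a double star $T_{n,3}^{a,b}$ with $a,b\geq 1$ after excluding $T_{n,3}^{0,n-4}$; your partition is equitable, the quotient matrix and its characteristic polynomial $x^4-(a+b+3)x^2+(a+1)(b+1)$ check out, the discriminant $(a+b+3)^2-4(a+1)(b+1)$ is strictly positive (so $y_-<y_+$ and both roots are genuinely distinct eigenvalues of $A(T)$, forcing $\lambda_2(T)\geq\sqrt{y_-}$), and the chain of equivalences reducing $y_-\geq 1$ to $ab\geq 1$ is exact. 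One could streamline the second case slightly, since trees of diameter $3$ are handled in the paper's framework via the same four-part partition (compare Lemma \ref{spec-tpq}, whose quotient matrix for the eccentricity matrix has the identical block structure), but as written your argument is complete and needs no repair.
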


\begin{thm}[{\cite[Theorem 10]{hofmeister1997two}}]\label{sec-larg-max}
	Let $T$ be a tree with $n\geq 4$ vertices. 
	\begin{enumerate}
		\item If $T \ncong T_{n,5}^{s-1,s-1}$, then $\lambda_2(T)\leq \sqrt{\frac{n-3}{2}}$. The equality holds if and only if $n=2s+3$ and $T\cong T_{n,4}^{s-1,s-1}, T_{n,5}^{s-2,s-1},T_{n,6}^{s-2,s-2}$.
		\item If $T \cong T_{n,5}^{s-1,s-1}$, then $\lambda_2(T)=x>\sqrt{\frac{n-3}{2}}$, where $x$ is the positive root of the equation $x^3+x^2-(s+1)-s=0$. 
	\end{enumerate}
\end{thm}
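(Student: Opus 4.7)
The proof strategy combines Cauchy interlacing with the classical bound $\lambda_1(T') \leq \sqrt{m-1}$ for a tree $T'$ on $m$ vertices (with equality iff $T' \cong K_{1,m-1}$), together with a centroid argument and a finite case analysis for the equality cases.

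For the upper bound in part (1), for any vertex $v$ of $T$ the matrix $A(T-v)$ is a principal submatrix of $A(T)$, so Cauchy interlacing yields $\lambda_2(T) \le \lambda_1(T-v) = \max_i \lambda_1(T_i)$, where $T_1,\ldots,T_k$ are the components of $T-v$. Choosing $v$ to be a centroid ensures $|V(T_i)| \le \lfloor n/2 \rfloor$ for every $i$, giving
\[
\lambda_2(T) \le \sqrt{\lfloor n/2 \rfloor - 1}.
\]
When $n = 2s+3$ this is exactly $\sqrt{s} = \sqrt{(n-3)/2}$. For even $n$ the centroid bound is slightly weaker, and one absorbs the remaining slack by noting that $\lambda_1(T_i) = \sqrt{|V(T_i)|-1}$ forces $T_i$ to be a star, then ruling out the corresponding configurations of $T$ by a structural check (no such $T$ with $n$ even achieves equality in all of the intermediate inequalities simultaneously).

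For the equality analysis, attaining $\lambda_2(T) = \sqrt{(n-3)/2}$ demands (a) $n = 2s+3$, (b) some component $T_j$ of $T-v$ is isomorphic to $K_{1,s}$, and (c) the interlacing is tight, i.e.\ the Perron eigenvector of $T_j$ lifts to a $\lambda_2$-eigenvector of $A(T)$. The lifting condition imposes algebraic compatibility constraints on the eigenvector values at $v$ and on the complementary branches of $T - T_j$; because $T$ is a tree these reduce to a small finite system, which I would parse by casework on $\deg(v)$ and on how the remaining $s+1$ vertices are distributed into branches at $v$. This case analysis isolates exactly the three trees $T_{n,4}^{s-1,s-1}$, $T_{n,5}^{s-2,s-1}$, and $T_{n,6}^{s-2,s-2}$; the tree $T_{n,5}^{s-1,s-1}$ fails the lifting condition because its two central Perron values cannot match on both sides, and it is treated separately in part (2).

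For part (2), the tree $T_{n,5}^{s-1,s-1}$ has an order-two automorphism whose orbits, refined by the levels of the spine $P_6$, give an equitable partition whose quotient matrix is of small size. Its characteristic polynomial works out to the cubic $x^3 + x^2 - (s+1)x - s$ (interpreting the statement with a minor typographical correction), and Theorem \ref{quo-spec} places its roots in $\operatorname{Spec}(A(T_{n,5}^{s-1,s-1}))$. The largest root is $\lambda_1$ and the second positive root is $\lambda_2(T)$; since here $n = 2s+4$, evaluating the cubic at $x = \sqrt{(n-3)/2} = \sqrt{s+1/2}$ gives $\tfrac{1}{2}(1 - \sqrt{(n-3)/2}) < 0$ for $n \ge 6$, placing the relevant root strictly above $\sqrt{(n-3)/2}$ as claimed. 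The main technical obstacle is the equality analysis of part (1): enforcing the lifting conditions, balancing pendant counts on both sides of the centroid, and excluding spurious configurations together require delicate combinatorial bookkeeping, and careful use of the strict version of $\lambda_1(T') < \sqrt{m-1}$ when $T'$ is not a star.
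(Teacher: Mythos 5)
First, a point of context: the paper does not prove this statement at all --- it is imported verbatim as \cite[Theorem 10]{hofmeister1997two} and used as a black box in the proof of Theorem \ref{comple-sec-larg-max}. So there is no in-paper proof to compare against, and I can only assess your sketch on its own merits. Your part (2) is essentially correct and checks out: the symmetric/antisymmetric decomposition of $T_{n,5}^{s-1,s-1}$ (equivalently, the equitable partition into pendant classes and spine levels) does produce the cubic $x^3+x^2-(s+1)x-s$ for the antisymmetric part, and your evaluation $p\bigl(\sqrt{(n-3)/2}\bigr)=\tfrac12\bigl(1-\sqrt{(n-3)/2}\bigr)<0$ correctly places its unique positive root above $\sqrt{(n-3)/2}$; one only needs to add the small check that this root lies below the Perron root of the symmetric cubic so that it really is $\lambda_2$.

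The genuine gap is in part (1) for even $n$. For $n=2s+4$ the centroid--interlacing--star chain gives only $\lambda_2(T)\le\sqrt{\lfloor n/2\rfloor-1}=\sqrt{s+1}$, while the target is $\sqrt{s+\tfrac12}$, and the slack cannot be ``absorbed by noting that equality forces a star component.'' The problem is not the equality case of the component bound but the looseness of vertex-deletion interlacing itself: take $T$ to be the balanced double star $T_{n,3}^{s,s}$ on $n=2s+4$ vertices. Every centroid $v$ of this tree leaves a component isomorphic to $K_{1,s+1}$, whose spectral radius is exactly $\sqrt{s+1}>\sqrt{s+\tfrac12}$, so your chain of inequalities bottoms out at $\sqrt{s+1}$ and can never certify the claimed bound, even though the true value $\lambda_2(T_{n,3}^{s,s})=\tfrac{-1+\sqrt{4s+5}}{2}$ is comfortably below $\sqrt{s+\tfrac12}$. (For small even $n$ even non-star components obstruct you: at $n=8$ a $P_4$-component has $\lambda_1=\tfrac{1+\sqrt5}{2}\approx1.618>\sqrt{2.5}\approx1.581$.) Closing the even case therefore needs a genuinely sharper tool than $\lambda_2(T)\le\lambda_1(T-v)$ combined with $\lambda_1\le\sqrt{m-1}$ --- for instance a direct analysis of where the sign change of a $\lambda_2$-eigenvector occurs, or second-order information about $\lambda_1$ of the components beyond the star bound. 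Separately, your equality characterization in the odd case is only a plan (``a small finite system \dots parsed by casework''), not an argument; as written it neither produces the three listed extremal trees nor excludes others, so part (1) should be regarded as unproved in your proposal.
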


\begin{lem}[{\cite[Proposition 4]{gutman1977acyclic}}]\label{energy-ordering}
	Let $T$ be a tree on $n\geq 2$ vertices such that $T$ is not isomorphic to $K_{1,n-1},T_{n,3}^{0,n-4},T_{n,3}^{1,n-3}$ and $T_{n,4}^{0,n-5}$. Then
	$$E_A(T)>E_A(T_{n,4}^{0,n-5})>E_A(T_{n,3}^{1,n-3})>E_A(T_{n,3}^{0,n-4})>E_A(K_{1,n-1}).$$
\end{lem}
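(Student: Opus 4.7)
The plan is to reduce the energy comparison to a comparison of matching polynomials via the Coulson integral formula. Recall that for a tree $T$ on $n$ vertices, the characteristic polynomial of $A(T)$ takes the form $\phi(T,x)=\sum_{k\geq 0}(-1)^k m(T,k)\,x^{n-2k}$, where $m(T,k)$ is the number of $k$-matchings of $T$ (with $m(T,0)=1$). Applying the Coulson integral to trees yields the representation
\begin{equation*}
E_A(T)\;=\;\frac{2}{\pi}\int_{0}^{\infty}\frac{1}{x^{2}}\,\ln\!\left(\sum_{k\geq 0}m(T,k)\,x^{2k}\right)dx.
\end{equation*}
From this integral representation one obtains the standard quasi-order: if $m(T_{1},k)\geq m(T_{2},k)$ for every $k\geq 0$, then $E_A(T_{1})\geq E_A(T_{2})$, and strict inequality is inherited as soon as some $k$ gives a strict inequality for matching numbers.

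With this in hand, I would reduce the chain of four energy inequalities to the corresponding chain of matching-number inequalities
\begin{equation*}
m(T,k)\;\geq\;m(T_{n,4}^{0,n-5},k)\;\geq\;m(T_{n,3}^{1,n-3},k)\;\geq\;m(T_{n,3}^{0,n-4},k)\;\geq\;m(K_{1,n-1},k),\qquad k\geq 0.
\end{equation*}
The three inequalities among the four named trees can be checked by a direct calculation, since each of $K_{1,n-1}$, $T_{n,3}^{0,n-4}$, $T_{n,3}^{1,n-3}$, and $T_{n,4}^{0,n-5}$ is a double-star-like tree whose $k$-matchings admit a simple closed-form count (just choose which among the pendant edges and the at most three inner edges are taken, respecting the matching condition). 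Comparing binomial expressions term by term then yields the middle three inequalities with strict inequality at $k=2$, which is what is needed to pass from $\geq$ to $>$ in the energies of the four named trees.

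The remaining, and principal, task is to show that for every tree $T$ on $n$ vertices different from the four listed ones, one has $m(T,k)\geq m(T_{n,4}^{0,n-5},k)$ for all $k$, with strict inequality for some $k$. I would prove this by induction on $n$ using the edge deletion--contraction identity for matching polynomials, namely
\begin{equation*}
m(T,k)\;=\;m(T-e,k)+m(T-\{u,v\},k-1)\quad\text{for any edge }e=uv,
\end{equation*}
combined with the classical "grafting" / Kelmans-type transformation that moves a pendant path from a vertex of smaller degree to a vertex of larger degree while keeping the tree a tree. One establishes the monotonicity lemma that each such transformation can only decrease (entrywise) the sequence $(m(T,k))_{k}$, and that after finitely many such transformations every tree not in the excluded list reduces to $T_{n,4}^{0,n-5}$, $T_{n,3}^{1,n-3}$, $T_{n,3}^{0,n-4}$, or $K_{1,n-1}$, with at least one step being strict.

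The main obstacle is this last step: identifying the correct collection of local grafting moves and verifying that they are both monotone on matching sequences and exhaustive, in the sense that every tree outside the excluded family has at least one move strictly increasing the matching sequence toward $T_{n,4}^{0,n-5}$. The case analysis is delicate near the boundary, because the trees $T_{n,3}^{0,n-4}$, $T_{n,3}^{1,n-3}$, and $T_{n,4}^{0,n-5}$ differ by a single pendant and so one must check that the reduction terminates at the correct representative for each small-diameter regime; once this is done carefully, the integral representation immediately yields the strict energy ordering displayed in the lemma.
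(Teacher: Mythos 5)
First, note that the paper does not actually prove this lemma: it is imported verbatim as Proposition~4 of Gutman's 1977 paper, so there is no internal argument to compare yours against, and your proposal has to stand on its own. Your framework --- the Coulson integral representation $E_A(T)=\frac{2}{\pi}\int_0^\infty x^{-2}\ln\big(\sum_{k\geq 0} m(T,k)x^{2k}\big)\,dx$, the induced quasi-order on trees by matching sequences, and a direct computation for the four named trees --- is indeed the standard route and is essentially how the cited source argues. The middle comparisons do come out as you say: $m(\cdot,2)$ equals $0$, $n-3$, $2n-8$ and $2n-7$ for $K_{1,n-1}$, $T_{n,3}^{0,n-4}$, the double star with $a=1$, and $T_{n,4}^{0,n-5}$ respectively, with all higher matching numbers vanishing, which gives the three right-hand strict inequalities for $n\geq 6$.

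The gap is in the step you yourself flag as the main obstacle, and it is not merely a matter of delicate bookkeeping: the claim that every tree outside the excluded list dominates $T_{n,4}^{0,n-5}$ entrywise in matching numbers, strictly for some $k$, is false. Since $m\big(T_{n,3}^{a,b},2\big)=(a+1)(b+1)$ and $m\big(T_{n,4}^{0,n-5},2\big)=2n-7$, the equation $(a+1)(b+1)=2n-7$ with $a+b=n-4$ has the solution $a=b=2$, $n=8$: the balanced double star $T_{8,3}^{2,2}$ has matching sequence $(1,7,9,0,\dots)$, identical to that of $T_{8,4}^{0,3}$, so the two trees are cospectral with characteristic polynomial $x^8-7x^6+9x^4$ and both have energy $2\sqrt{13}$. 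Hence no sequence of monotone grafting moves can yield a strict increase for this tree, and the strict inequality $E_A(T)>E_A(T_{n,4}^{0,n-5})$ in the statement itself fails for it; a correct writeup must either add this exception (or exclude $n=8$) or weaken that inequality to $\geq$. (This does not damage the present paper, which invokes the lemma only in the form $E_A(T)\geq E_A(T_{n,4}^{0,n-5})$ and only for trees of diameter at least $4$, where this tree does not occur.) Beyond that, the exhaustive verification that all remaining trees do dominate $T_{n,4}^{0,n-5}$ --- including the trees of matching number $2$ that are not double stars and the near-stars of larger diameter --- is the entire content of the lemma and cannot be left as an acknowledged obstacle; as written, your argument is a plan rather than a proof.
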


\begin{lem}[{\cite[Proposition 3]{gutman1977acyclic}}]\label{max-energy}
	Let $T$ be a tree on $n\geq 2$ vertices such that $T\ncong K_{1,n-1}, P_n$. Then 
	$$E_A(K_{1,n-1})<E_A(T)<E_A(P_n).$$
\end{lem}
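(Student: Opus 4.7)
My plan is to invoke the classical bridge between $A$-energy, the characteristic polynomial, and matching numbers given by the Coulson integral formula. For a tree $T$ on $n$ vertices, the characteristic polynomial coincides with the matching polynomial, so
$$\phi(T,x)=\sum_{k\geq 0}(-1)^{k}m_k(T)\,x^{n-2k},$$
where $m_k(T)$ denotes the number of $k$-matchings in $T$ and $m_0(T)=1$. Substituting into Coulson's formula yields the representation
$$E_A(T)=\frac{1}{\pi}\int_{-\infty}^{\infty}\frac{1}{x^{2}}\ln\!\Bigl(\sum_{k\geq 0}m_k(T)\,x^{2k}\Bigr)\,dx,$$
from which I would extract the crucial monotonicity principle: if $m_k(T_1)\leq m_k(T_2)$ for every $k\geq 0$, then $E_A(T_1)\leq E_A(T_2)$, with strict inequality whenever some $m_k$ is strict.

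For the lower bound, every tree on $n$ vertices has $m_0=1$ and $m_1=n-1$, while $m_k(K_{1,n-1})=0$ for $k\geq 2$ because the star has no two independent edges. If $T\ncong K_{1,n-1}$, then $\diam(T)\geq 3$, so $T$ contains $P_4$ as a subgraph, which furnishes a $2$-matching; hence $m_2(T)\geq 1>0=m_2(K_{1,n-1})$. Monotonicity then gives $E_A(K_{1,n-1})<E_A(T)$.

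For the upper bound, the idea is to design a tree-to-tree transformation $\sigma$ such that for any $T\ncong P_n$ the tree $\sigma(T)$ strictly dominates $T$ in the matching sequence, and iterating $\sigma$ terminates at $P_n$. The natural candidate is a \emph{path-lengthening move}: fix a diametrical path $v_0v_1\cdots v_d$ in $T$, choose an internal $v_i$ carrying a non-trivial branch $B$ off this path, detach $B$ at $v_i$, and regraft it as a pendant path extending $v_0v_1\cdots v_d$ at one of its endpoints. Using the fundamental matching recurrence
$$m_k(T)=m_k(T-e)+m_{k-1}(T-u-v)$$
at an appropriately chosen edge $e=uv$ near the grafting site, one would compare $m_k(T)$ and $m_k(\sigma(T))$ term by term. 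The hard part will be exactly this combinatorial verification: one has to track how matchings combining edges of $B$ with edges of the path near $v_i$ redistribute after the move. I would carry this out by induction on the number of off-path vertices, using the diametrality of $v_0v_1\cdots v_d$ to guarantee that the inductive step applies to a strictly smaller configuration. Once dominance of $\sigma(T)$ is established, iterating $\sigma$ strictly grows the matching sequence and terminates at $P_n$, so the monotonicity principle delivers $E_A(T)<E_A(P_n)$.
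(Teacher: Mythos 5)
Your overall framework---the Coulson integral applied to the matching polynomial of a forest, giving the quasi-order principle that $m_k(T_1)\leq m_k(T_2)$ for all $k$ implies $E_A(T_1)\leq E_A(T_2)$, strictly if some inequality is strict---is exactly the method of the cited source \cite{gutman1977acyclic}; the paper itself states Lemma \ref{max-energy} with a citation and offers no proof of its own. Your lower bound is complete and correct: every tree on $n$ vertices has $m_0=1$ and $m_1=n-1$, the star has $m_k=0$ for $k\geq 2$, and any non-star tree has diameter at least $3$ and hence a $2$-matching.

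The upper bound, however, has a genuine gap, and it sits exactly where you concede the hard part lies. The move you propose---detach an arbitrary branch $B$ at an internal vertex of a diametrical path and regraft it ``as a pendant path'' at an endpoint---is not well defined when $B$ is not itself a path, and an arbitrary branch-relocation is not in general monotone for the matching quasi-order; the recurrence $m_k(T)=m_k(T-e)+m_{k-1}(T-u-v)$ by itself does not yield a term-by-term comparison without a carefully chosen local configuration. The standard repair (and, in substance, Gutman's argument) is the pendant-path shifting lemma: if $G_w(k,l)$ denotes the graph obtained from a graph $G$ with at least one edge by attaching two pendant paths of lengths $k\geq l\geq 1$ at a vertex $w$, then $m_j\big(G_w(k+1,l-1)\big)\geq m_j\big(G_w(k,l)\big)$ for all $j$, with strict inequality for some $j$; this is proved by applying the matching recurrence at the terminal edge of the shorter path. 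Any tree $T\ncong P_n$ has a vertex of degree at least $3$ all but one of whose branches are pendant paths (take a branching vertex at maximum distance from an endpoint of a diametrical path), so the lemma applies there; iterating strictly increases the matching sequence, eventually reduces the degree of that vertex, and terminates at $P_n$. Replacing your unspecified ``path-lengthening move'' by this lemma, together with the termination argument, closes the proof; as written, the dominance of $\sigma(T)$ over $T$ is asserted but not established.
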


Next, we state a result about the minimum second largest $\mathcal{E}$-eigenvalue of trees.

\begin{thm}[\cite{mahato2022minimizers}]\label{second-larg}
	Let $T$ be a tree on $n\geq 4$ vertices other than the star. Then 
	$$\xi_2(T)\geq \sqrt{\frac{13n-35-\sqrt{169n^2-974n+1417}}{2}} $$
	with equality if and only if $T\cong T_{n,3}^{0,n-4}$.
\end{thm}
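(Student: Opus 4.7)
The natural approach is to split by the diameter of $T$; since $T\ncong K_{1,n-1}$ we have $\diam(T)\ge 3$.

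\textbf{Case 1: $\diam(T)=3$.} Here $T\cong T_{n,3}^{a,b}$ for some $a+b=n-4$ with $b\ge a\ge 0$. Following the template of Lemma~\ref{spec-tpq}, one checks that in $\mathcal{E}(T_{n,3}^{a,b})$ the vertices of $\{u_1,\ldots,u_a,v_0\}$ are pairwise twins, and so are those of $\{v_3,w_1,\ldots,w_b\}$; hence $0$ is an $\mathcal{E}$-eigenvalue of multiplicity $n-4$ and the partition $\{u_1,\ldots,u_a,v_0\}\cup\{v_1\}\cup\{v_2\}\cup\{v_3,w_1,\ldots,w_b\}$ is equitable. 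Writing the corresponding $4\times 4$ quotient matrix and expanding its characteristic polynomial gives a biquadratic whose nonzero roots are $\pm\sqrt{y_{\pm}}$ with
\[
y_{\pm}=\frac{9s+4n-8\pm\sqrt{(9s+4n-8)^{2}-64s}}{2},\qquad s:=(a+1)(b+1).
\]
By Theorem~\ref{quo-spec}, $\xi_2(T_{n,3}^{a,b})=\sqrt{y_-}$. A short derivative check shows that $y_-$ is strictly increasing in $s$ for $n\ge 4$, and the identity $(a+1)(b+1)-(n-3)=ab\ge 0$ forces $s\ge n-3$ with equality iff $a=0$. Substituting $s=n-3$ yields exactly the claimed lower bound and singles out $T_{n,3}^{0,n-4}$ as the unique extremal tree in this case.

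\textbf{Case 2: $\diam(T)\ge 4$.} Here one must establish the strict inequality $\xi_2(T)>\sqrt{y_-(n-3)}$. A natural plan is to first prove, via a graft-transformation argument, that among trees of a fixed diameter $d\ge 4$ the minimum of $\xi_2$ is attained by a caterpillar $D_{n,d}^{a,b}$ from Figure~\ref{fig2}; then compute $\Spec_{\mathcal{E}}(D_{n,d}^{a,b})$ by the same equitable-partition and twin-vertex machinery as in Case~1, and verify $\xi_2(D_{n,d}^{a,b})>\sqrt{y_-(n-3)}$ for every admissible $d,a,b$. An alternative is to apply Cauchy interlacing to a carefully chosen $4\times 4$ or $5\times 5$ principal submatrix of $\mathcal{E}(T)$ indexed by vertices near the two ends of a diametrical path, exploiting that the presence of $\mathcal{E}$-entries of value $\ge 4$ (which do not occur in any diameter-$3$ tree) already forces that submatrix's second eigenvalue above the target.

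The principal obstacle is Case~2. The eccentricity matrix of a diameter-$\ge 4$ tree varies wildly with its branching structure, and the long-distance entries of value $d$ rather than $3$ make direct algebraic comparison with the diameter-$3$ extremal spectrum delicate. The key technical ingredient would be a graft-transformation lemma in the spirit of the Kelmans-type operations used for the adjacency spectral radius but tailored to the eccentricity matrix, where moving a vertex can change eccentricities abruptly and requires separate handling of the $d=4$ boundary. Once such a monotonicity lemma is in place, Case~2 collapses to a finite parametric family of caterpillars whose $\xi_2$ values can be compared to $\sqrt{y_-(n-3)}$ by routine polynomial estimates.
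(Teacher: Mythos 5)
First, note that the paper does not prove this statement at all: it is imported verbatim from \cite{mahato2022minimizers} as a known ingredient (used later for the Nordhaus--Gaddum bound), so there is no in-paper argument to measure you against. Judged on its own terms, your Case~1 is correct and complete. The block structure you describe for $\mathcal{E}(T_{n,3}^{a,b})$ is right, the quotient matrix has the bipartite form $\left(\begin{smallmatrix}0&B\\C&0\end{smallmatrix}\right)$ with $\tr(BC)=9s+4n-8$ and $\det(BC)=16s$ for $s=(a+1)(b+1)$, which gives exactly your $y_{\pm}$; the rank-$4$ observation accounts for the $n-4$ zero eigenvalues; monotonicity of $y_-$ in $s$ reduces to $4n-8>16/9$; and $s=n-3+ab$ pins down $T_{n,3}^{0,n-4}$. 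Substituting $s=n-3$ reproduces the stated radical.

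The genuine gap is Case~2, and you have identified it yourself without closing it. For trees of diameter at least $4$ you offer two candidate strategies (a graft-transformation reduction to the caterpillars $D_{n,d}^{a,b}$, or interlacing on a principal submatrix near the ends of a diametrical path), but you carry out neither: no transformation lemma is stated or proved, no submatrix is exhibited whose second eigenvalue exceeds the target, and no comparison of $\xi_2(D_{n,d}^{a,b})$ with $\sqrt{y_-(n-3)}$ is performed. This is not a minor omission. The target value $\sqrt{y_-(n-3)}$ tends to $\sqrt{16/13}\approx 1.11$ as $n\to\infty$, so the inequality to be proved in Case~2 is quantitatively tight in the sense that it cannot be dispatched by a crude norm or trace bound; moreover, naive interlacing with a $2\times 2$ submatrix $\left(\begin{smallmatrix}0&d\\d&0\end{smallmatrix}\right)$ only yields $\xi_2(T)\geq -d$, and any larger submatrix has entries that depend delicately on which off-path vertices realize eccentricities, which is precisely the difficulty you flag. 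As written, the argument proves the theorem only for trees of diameter $3$; to be acceptable it would need the monotonicity/reduction lemma for diameter at least $4$ actually established, or an explicit interlacing certificate valid for every such tree.
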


The following theorem is about the characterization of trees with minimum $\mathcal{E}$-energy.

\begin{thm}[\cite{mahato2022minimizers}]\label{min-energy}
	Let $T$ be a tree on $n\geq 5$ vertices. Then 
	$$E_{\mathcal{E}}(T)\geq 2\sqrt{13n-35+8\sqrt{n-3}}$$
	with equality if and only if $T\cong T_{n,3}^{0,n-4}$. 
\end{thm}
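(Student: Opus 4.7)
I would first compute the $\mathcal{E}$-spectrum of the candidate extremizer $T_{n,3}^{0,n-4}$, to verify that the right-hand side $2\sqrt{13n-35+8\sqrt{n-3}}$ really is $E_\mathcal{E}(T_{n,3}^{0,n-4})$. Writing $T_{n,3}^{0,n-4}$ as $P_4=v_0v_1v_2v_3$ with pendants $w_1,\ldots,w_{n-4}$ attached to $v_2$, a direct inspection of eccentricities shows that the rows of $\mathcal{E}(T_{n,3}^{0,n-4})$ indexed by $v_3,w_1,\ldots,w_{n-4}$ are identical (each carries entry $3$ at $v_0$, $2$ at $v_1$, and $0$ elsewhere), so $0$ is an $\mathcal{E}$-eigenvalue of multiplicity at least $n-4$. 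The partition $\{v_0\},\{v_1\},\{v_2\},\{v_3,w_1,\ldots,w_{n-4}\}$ is equitable, and its $4\times 4$ quotient matrix has bipartite block structure between $\{v_0,v_1\}$ and $\{v_2\}\cup\{v_3,w_j\}$; the product of its two off-diagonal blocks has trace $13n-35$ and determinant $16(n-3)$, so the four nonzero $\mathcal{E}$-eigenvalues are $\pm\sqrt{\mu_+},\pm\sqrt{\mu_-}$ where $\mu_\pm$ are the roots of $\mu^2-(13n-35)\mu+16(n-3)=0$. Hence $E_\mathcal{E}(T_{n,3}^{0,n-4})=2(\sqrt{\mu_+}+\sqrt{\mu_-})=2\sqrt{13n-35+8\sqrt{n-3}}$.

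For the lower bound on general $T$, the trace identity $\tr\mathcal{E}(T)=0$ gives $E_\mathcal{E}(T)=2\sum_{\xi_i(T)>0}\xi_i(T)\geq 2(\xi_1(T)+\xi_2(T))$ whenever $\xi_2(T)>0$. For $n\geq 5$ and $T\not\cong K_{1,n-1}$, Theorem~\ref{second-larg} delivers $\xi_2(T)\geq\sqrt{\mu_-}>0$, with equality iff $T\cong T_{n,3}^{0,n-4}$, so the bound is in force. It then remains to show $\xi_1(T)\geq\sqrt{\mu_+}$ with equality iff $T\cong T_{n,3}^{0,n-4}$, i.e.\ to identify $T_{n,3}^{0,n-4}$ as the unique minimizer of the $\mathcal{E}$-spectral radius among non-star trees; this is the theorem of Wei et al.~\cite{wei2020solutions}. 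Chaining the two estimates gives
$$E_\mathcal{E}(T)\geq 2(\xi_1(T)+\xi_2(T))\geq 2(\sqrt{\mu_+}+\sqrt{\mu_-})=2\sqrt{13n-35+8\sqrt{n-3}}.$$
Equality forces $\xi_1=\sqrt{\mu_+}$, $\xi_2=\sqrt{\mu_-}$ (each uniquely at $T_{n,3}^{0,n-4}$), together with the side condition that $\mathcal{E}(T)$ has at most two positive eigenvalues; this last condition is automatic at $T_{n,3}^{0,n-4}$ by the spectrum computation above.

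The omitted case $T\cong K_{1,n-1}$ is settled by direct calculation: the star's $\mathcal{E}$-spectrum consists of $(n-2)\pm\sqrt{n^2-3n+3}$ together with the eigenvalue $-2$ of multiplicity $n-2$, so $E_\mathcal{E}(K_{1,n-1})=2(n-2)+2\sqrt{n^2-3n+3}$. This grows like $2n$ while the target grows like $\sqrt{13n}$, so the strict inequality $E_\mathcal{E}(K_{1,n-1})>2\sqrt{13n-35+8\sqrt{n-3}}$ is a routine check for every $n\geq 5$.

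The main obstacle I foresee is the clean use of the minimum $\mathcal{E}$-spectral radius result: one needs the precise value $\sqrt{\mu_+}$ and uniqueness of $T_{n,3}^{0,n-4}$ at that extreme, in a form directly compatible with the equality discussion above. If~\cite{wei2020solutions} cannot be cited in exactly this form, the backup is to prove the joint bound $\xi_1(T)+\xi_2(T)\geq\sqrt{\mu_+}+\sqrt{\mu_-}$ in one shot, for instance via the Ky Fan characterization $\xi_1(T)+\xi_2(T)=\max\{\tr(U^T\mathcal{E}(T)U):U^TU=I_2\}$ applied with $U$ adapted to a diametrical pair and its pendant neighborhood in $T$; this would reduce the problem to a quotient-type $2\times 2$ calculation analogous to the one performed for $T_{n,3}^{0,n-4}$ in the first step.
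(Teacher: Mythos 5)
This statement is quoted from \cite{mahato2022minimizers} and the present paper gives no proof of it, so there is nothing internal to compare against; judged on its own terms, your argument is correct and is almost certainly the argument of the cited source. Your spectrum computation for $T_{n,3}^{0,n-4}$ checks out: the bipartition $\{v_0,v_1\}$ versus $\{v_2,v_3,w_1,\dots,w_{n-4}\}$ does carry all nonzero entries, the product of the off-diagonal blocks has trace $13n-35$ and determinant $16(n-3)$, and hence $E_{\mathcal{E}}(T_{n,3}^{0,n-4})=2(\sqrt{\mu_+}+\sqrt{\mu_-})=2\sqrt{13n-35+8\sqrt{n-3}}$. The reduction $E_{\mathcal{E}}(T)=2\sum_{\xi_i>0}\xi_i\geq 2(\xi_1+\xi_2)$ via $\tr\mathcal{E}(T)=0$ is sound, $\sqrt{\mu_-}$ is exactly the bound of Theorem~\ref{second-larg} (note $(13n-35)^2-64(n-3)=169n^2-974n+1417$), and the star computation and its comparison are routine as you say. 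The only load-bearing external input is the claim that $T_{n,3}^{0,n-4}$ uniquely minimizes $\xi_1$ among $n$-vertex trees, i.e.\ $\xi_1(T)\geq\sqrt{\mu_+}$ with equality only for the broom; this is indeed the content of \cite{wei2020solutions} (as described in the introduction here), but you should verify that it is stated there for the full range $n\geq 5$ and with the uniqueness you need for the equality case, since your whole equality discussion rests on it. Your fallback via the Ky Fan bound on $\xi_1+\xi_2$ is a reasonable insurance policy but should not be necessary.
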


\section{Extremal problems for $\mathcal{E}$-spectral radius and least $\mathcal{E}$-eigenvalue}

In this section, we characterize the graphs with the minimum and maximum $\mathcal{E}$-spectral radius among the complements of all trees on $n$ vertices. As a consequence, we determine the graphs for which the least $\mathcal{E}$-eigenvalues attain the minimum and maximum value among $\mathcal{T}_n^c$, where $\mathcal{T}_n^c$ denote the complements of all trees on $n$ vertices. First, we give an ordering of the complements of trees with diameter $3$ according to their $\mathcal{E}$-spectral radius.

\begin{thm}\label{comple-spectral-ordering}
	Let $T$ be a tree with diameter $3$, that is, $T\cong T_{n,3}^{a,b}$ with $a+b=n-4$, $b\geq a\geq 0$. Then the complements of $T_{n,3}^{a,b}$ can be ordered with respect to their $\mathcal{E}$-spectral radius as follows:
	$$\xi_1\big((T_{n,3}^{0,n-4})^c\big)> \xi_1\big((T_{n,3}^{1,n-3})^c\big)>\hdots > \xi_1\Big(\big(T_{n,3}^{\lfloor \frac{n-4}{2}\rfloor,\lceil \frac{n-4}{2}\rceil}\big)^c\Big). $$
\end{thm}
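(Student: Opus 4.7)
The plan is to reduce the claim to a monotonicity statement about a single scalar quantity via the explicit formula for the $\mathcal{E}$-spectral radius provided by Lemma \ref{spec-tpq}. From that lemma,
$$\xi_1\big((T_{n,3}^{a,b})^c\big) = \sqrt{\frac{4n+1+\sqrt{(4n+1)^2-64(a+1)(b+1)}}{2}},$$
so $\xi_1\big((T_{n,3}^{a,b})^c\big)$ is a strictly increasing function of the quantity $(4n+1)^2-64(a+1)(b+1)$, or equivalently a strictly decreasing function of the product $(a+1)(b+1)$.

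Using the constraint $a+b = n-4$, I would set $g(a) := (a+1)(b+1) = (a+1)(n-3-a)$ and regard this as a function of $a$ on the integer range $0 \le a \le \lfloor (n-4)/2\rfloor$. A direct computation gives
$$g(a+1)-g(a) = (a+2)(n-4-a)-(a+1)(n-3-a) = n-5-2a,$$
which is strictly positive whenever $a < (n-5)/2$, and remains positive for all integers $a$ with $a \le \lfloor (n-4)/2\rfloor - 1$. Hence $g$ is strictly increasing on the admissible range of $a$.

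Consequently, as $a$ runs through $0,1,\dots,\lfloor (n-4)/2\rfloor$, the product $(a+1)(b+1)$ strictly increases, so the expression under the outer square root strictly decreases, and therefore
$$\xi_1\big((T_{n,3}^{0,n-4})^c\big)>\xi_1\big((T_{n,3}^{1,n-3})^c\big)>\cdots>\xi_1\Big(\big(T_{n,3}^{\lfloor (n-4)/2\rfloor,\lceil (n-4)/2\rceil}\big)^c\Big),$$
as claimed.

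There is no real obstacle here: once Lemma \ref{spec-tpq} is in hand, the argument is a one-variable monotonicity check for the quadratic $g(a)=(a+1)(n-3-a)$, whose maximum on the real line lies at $a=(n-4)/2$ and which is therefore increasing on $[0,(n-4)/2]$. The only thing to be slightly careful about is confirming that $(4n+1)^2-64(a+1)(b+1)\ge 0$ throughout the range so that the inner square root is real; since the $\mathcal{E}$-eigenvalues are real by symmetry of $\mathcal{E}(G)$, this is automatic, but it can also be verified directly by noting $(a+1)(b+1)\le ((n-2)/2)^2$ and comparing with $(4n+1)^2/64$.
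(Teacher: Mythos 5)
Your proof is correct and follows essentially the same route as the paper: both reduce the claim, via Lemma \ref{spec-tpq}, to the monotonicity of $(a+1)(b+1)=(a+1)(n-3-a)$ in $a$ on $0\le a\le\lfloor(n-4)/2\rfloor$. The only (cosmetic) difference is that you verify this with the discrete difference $g(a+1)-g(a)=n-5-2a>0$, whereas the paper differentiates the corresponding continuous function; your version makes the strictness of the inequalities slightly more transparent.
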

\begin{proof}
	By Lemma \ref{spec-tpq} , we have
	\begin{align*}
		\xi_1\big((T_{n,3}^{a,b})^c\big)=&~ \sqrt{\frac{4n+1+\sqrt{(4n+1)^2-64(a+1)(b+1)}}{2}}\\
		=&~ \sqrt{\frac{4n+1+\sqrt{(4n+1)^2-64(n-3+a(n-4-a))}}{2}} \qquad \text{(since, $a+b=n-4$)}.
	\end{align*}
	
	Now, consider the function $f(x)=4n+1+\sqrt{(4n+1)^2-64(n-3+x(n-4-x))}$, where $0\leq x \leq \lfloor \frac{n-4}{2}\rfloor$. Therefore, $$f^{\prime}(x)= \frac{-32(n-4-2x)}{\sqrt{(4n+1)^2-64(n-3+x(n-4-x))}}\leq 0.$$
	Hence, $f(x)$ is an decreasing function of $x$, where $0\leq x \leq \lfloor \frac{n-4}{2}\rfloor$. Therefore, $\xi_1\big((T_{n,3}^{a,b})^c\big)=\sqrt{\frac{f(a)}{2}}$ is an decreasing function for $0\leq a \leq \lfloor \frac{n-4}{2}\rfloor$. This completes the proof.
\end{proof}

As a consequence of the above result, we get the following corollaries.

\begin{cor}\label{max-spectral-diam3}
	Let $T$ be a tree on $n\geq 4$ vertices with $\diam(T)=3$. Then 
	$$\xi_1(T^c)\leq \sqrt{\frac{4n+1+\sqrt{(4n+1)^2-64(n-3)}}{2}}$$
	with equality if and only if $T\cong T_{n,3}^{0,n-4}$.
\end{cor}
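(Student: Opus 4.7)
The plan is to recognize that this corollary is an immediate consequence of the ordering established in Theorem \ref{comple-spectral-ordering}, together with the explicit formula given by Lemma \ref{spec-tpq}. The only genuine content is the classification step at the start: I must observe that every tree on $n$ vertices of diameter exactly $3$ is isomorphic to some $T_{n,3}^{a,b}$ with $a+b = n-4$ and $0 \le a \le b$. This is a standard structural fact (any diameter-$3$ tree has a unique central edge, and all remaining vertices are leaves attached to one of its two endpoints), and I would simply invoke it.

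Given that classification, Theorem \ref{comple-spectral-ordering} tells me that among the candidates $T_{n,3}^{a,b}$ the value of $\xi_1\bigl((T_{n,3}^{a,b})^c\bigr)$ strictly decreases as $a$ runs from $0$ up to $\lfloor (n-4)/2 \rfloor$. Consequently the maximum $\mathcal{E}$-spectral radius among complements of diameter-$3$ trees on $n$ vertices is attained uniquely at $a = 0$, $b = n-4$, i.e., at $T \cong T_{n,3}^{0,n-4}$.

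To obtain the displayed numerical value, I substitute $a=0$ and $b = n-4$ into the formula from Lemma \ref{spec-tpq}. Then $(a+1)(b+1) = n-3$, which yields
\[
\xi_1\bigl((T_{n,3}^{0,n-4})^c\bigr) \;=\; \sqrt{\frac{4n+1+\sqrt{(4n+1)^2 - 64(n-3)}}{2}},
\]
matching the bound in the statement. Combined with the strict inequalities of Theorem \ref{comple-spectral-ordering}, this gives both the inequality and the equality characterization.

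I do not expect any serious obstacle; the only step that deserves a brief justification is the structural claim that every diameter-$3$ tree on $n$ vertices is of the form $T_{n,3}^{a,b}$. Everything after that is pure substitution into already-established results.
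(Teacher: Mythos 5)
Your proposal is correct and follows exactly the paper's route: the paper's own proof of this corollary is just the one-line citation of Lemma \ref{spec-tpq} and Theorem \ref{comple-spectral-ordering}, with the classification of diameter-$3$ trees as $T_{n,3}^{a,b}$ taken for granted in the statement of Theorem \ref{comple-spectral-ordering}. Your version merely makes the substitution $a=0$, $b=n-4$ and the structural classification explicit, which is a faithful elaboration of the same argument.
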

\begin{proof}
	The proof follows from Lemma \ref{spec-tpq} and Theorem \ref{comple-spectral-ordering}.
\end{proof}

\begin{cor}\label{min-spectral-diam3}
	If $T$ is a tree with diameter $3$, then
	\begin{align*}
		\xi_1(T^c)\geq \begin{cases}
			\text{$ \sqrt{\frac{4n+1+\sqrt{72n-63}}{2}}$} & \quad\text{if $n$ is even,}\\
			\text{$ \sqrt{\frac{4n+1+\sqrt{72n-47}}{2}}$} & \quad\text{if $n$ is odd.}
		\end{cases} 
	\end{align*}
	The equality holds if and only if $T\cong T_{n,3}^{\lfloor \frac{n-4}{2}\rfloor,\lceil \frac{n-4}{2}\rceil}$.
\end{cor}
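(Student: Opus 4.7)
The plan is to read off the minimum directly from the monotonicity established in Theorem \ref{comple-spectral-ordering}. Since every tree of diameter $3$ is of the form $T_{n,3}^{a,b}$ with $a+b=n-4$ and $0\le a\le b$, and since Theorem \ref{comple-spectral-ordering} shows that $\xi_1\big((T_{n,3}^{a,b})^c\big)$ is a strictly decreasing function of $a$ on $0\le a\le \lfloor(n-4)/2\rfloor$, the minimum is attained uniquely at $a=\lfloor(n-4)/2\rfloor$, $b=\lceil(n-4)/2\rceil$, which is precisely $T_{n,3}^{\lfloor(n-4)/2\rfloor,\lceil(n-4)/2\rceil}$. This settles the equality case for free.

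Next I would compute the numerical value of the bound by plugging these extremal values of $a,b$ into the closed form of $\xi_1$ coming from Lemma \ref{spec-tpq}, namely
\[
\xi_1\big((T_{n,3}^{a,b})^c\big)=\sqrt{\tfrac{4n+1+\sqrt{(4n+1)^{2}-64(a+1)(b+1)}}{2}}.
\]
Everything then reduces to evaluating $(a+1)(b+1)$ for the balanced split. I would split into two cases according to the parity of $n$. When $n$ is even, $a=b=\frac{n-4}{2}$, so $(a+1)(b+1)=\big(\frac{n-2}{2}\big)^{2}=\frac{(n-2)^{2}}{4}$, and a short expansion gives $(4n+1)^{2}-16(n-2)^{2}=72n-63$. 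When $n$ is odd, $a=\frac{n-5}{2}$ and $b=\frac{n-3}{2}$, so $(a+1)(b+1)=\frac{(n-3)(n-1)}{4}$, and a similar expansion gives $(4n+1)^{2}-16(n-3)(n-1)=72n-47$. Substituting these back into the formula yields the two claimed bounds.

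There is essentially no obstacle: the monotonicity in Theorem \ref{comple-spectral-ordering} does all of the combinatorial work, and the rest is a two-case algebraic simplification. The only thing one has to be mildly careful about is the floor/ceiling bookkeeping when $n$ is odd (making sure the balanced split really is $\{(n-5)/2,(n-3)/2\}$ rather than the other way around), but this is purely cosmetic since $(a+1)(b+1)$ is symmetric in $a,b$. Thus the corollary is an immediate consequence of Lemma \ref{spec-tpq} and Theorem \ref{comple-spectral-ordering}, and the proof can be written in just a few lines.
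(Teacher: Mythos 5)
Your proposal is correct and follows essentially the same route as the paper: the paper likewise evaluates $\xi_1$ at the balanced split via Lemma \ref{spec-tpq} (obtaining $72n-63$ for even $n$ and $72n-47$ for odd $n$ from $(a+1)(b+1)=\lceil\frac{n-2}{2}\rceil\lfloor\frac{n-2}{2}\rfloor$) and invokes the strict ordering of Theorem \ref{comple-spectral-ordering} for the inequality and the equality case. No gaps; your algebra checks out.
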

\begin{proof}
	If $T\cong T_{n,3}^{\lfloor \frac{n-4}{2}\rfloor,\lceil \frac{n-4}{2}\rceil}$, by Lemma \ref{spec-tpq} it follows that
	\begin{align*}
		\xi_1(T^c)=&~ \sqrt{\frac{4n+1+\sqrt{(4n+1)^2-64\big(\lceil \frac{n-2}{2}\rceil \lfloor \frac{n-2}{2}\rfloor\big)}}{2}}\\
		=&~ \begin{cases}
			\text{$ \sqrt{\frac{4n+1+\sqrt{72n-63}}{2}}$} & \quad\text{if $n$ is even,}\\
			\text{$ \sqrt{\frac{4n+1+\sqrt{72n-47}}{2}}$} & \quad\text{if $n$ is odd.}
		\end{cases}
	\end{align*}
	If $T\ncong T_{n,3}^{\lfloor \frac{n-4}{2}\rfloor,\lceil \frac{n-4}{2}\rceil}$, then, by Theorem \ref{comple-spectral-ordering}, it follows that $\xi_1(T^c)>\xi_1\Big(\big(T_{n,3}^{\lfloor \frac{n-4}{2}\rfloor,\lceil \frac{n-4}{2}\rceil}\big)^c\Big)$. This completes the proof.
\end{proof}

In the following theorem, we characterize the trees whose complement has the minimum $\mathcal{E}$-spectral radius among the complements of all trees on $n$ vertices. Note that if $T$ is a tree with $\diam(T)>3$, then $\mathcal{E}(T^c)=2A(T)$. If $\diam(T)=3$, then $\mathcal{E}(T^c)\geq 2A(T)$ entrywise.

\begin{thm}\label{spec-min}
	Let $T$ be a tree of order $n\geq 4$. Then 
	$$\xi_1(T^c)\geq \xi_1(P_n^c) $$
	with equality if and only if $T\cong P_n$.
\end{thm}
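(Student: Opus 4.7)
The plan is to reduce the statement to the classical Lovász--Pelikán bound on the adjacency spectral radius of trees (Theorem \ref{spectral-min}), via a Perron--Frobenius comparison between $\mathcal{E}(T^c)$ and $2A(T)$. The central observation, already recorded in the introduction, is that for any non-star tree $T$ (so $\diam(T)\geq 3$), the eccentricity matrix $\mathcal{E}(T^c)$ dominates $2A(T)$ entrywise: if $\diam(T)\geq 4$ we actually have the identity $\mathcal{E}(T^c)=2A(T)$, while if $\diam(T)=3$ the entrywise inequality $\mathcal{E}(T^c)\geq 2A(T)$ holds (the extra positive contributions come from the edge $v_1v_2$, which is at distance $3$ in $T^c$). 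In both cases both matrices are nonnegative and $\mathcal{E}(T^c)$ is irreducible, so by the Perron--Frobenius monotonicity principle,
$$\xi_1(T^c)=\rho(\mathcal{E}(T^c))\geq \rho(2A(T))=2\lambda_1(T).$$

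Next I would invoke Theorem \ref{spectral-min}: $\lambda_1(T)\geq 2\cos\frac{\pi}{n+1}$, with equality if and only if $T\cong P_n$. For $n\geq 5$ the path $P_n$ has diameter $n-1\geq 4$, so $\mathcal{E}(P_n^c)=2A(P_n)$, which gives
$$\xi_1(P_n^c)=2\lambda_1(P_n)=4\cos\tfrac{\pi}{n+1}.$$
Chaining everything together,
$$\xi_1(T^c)\;\geq\;2\lambda_1(T)\;\geq\;2\lambda_1(P_n)\;=\;\xi_1(P_n^c),$$
with strict inequality in the middle as soon as $T\not\cong P_n$, again by Theorem \ref{spectral-min}. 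The remaining case $n=4$ is immediate since $P_4$ is the only non-star tree on four vertices.

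For the equality discussion, if $T\cong P_n$ both inequalities collapse to equalities; conversely, $\xi_1(T^c)=\xi_1(P_n^c)$ forces $\lambda_1(T)=\lambda_1(P_n)$ and hence $T\cong P_n$ by Theorem \ref{spectral-min}.

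I do not anticipate any serious obstacle here. The only point that requires care is the Perron--Frobenius comparison in the $\diam(T)=3$ case: one must check nonnegativity of both matrices and irreducibility of $\mathcal{E}(T^c)$, both of which are on record. Notably, we never need to compute $\xi_1(T^c)$ explicitly when $\diam(T)=3$; the Perron--Frobenius lower bound $\xi_1(T^c)\geq 2\lambda_1(T)$, combined with $\lambda_1(T)>\lambda_1(P_n)$ whenever $T\not\cong P_n$, already delivers the required strict inequality, so the diameter-$3$ family (for which $\mathcal{E}(T^c)\neq 2A(T)$) is handled with essentially no extra work.
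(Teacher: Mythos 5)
Your proof is correct, and while it shares the paper's overall skeleton (dispose of $n=4$, split on $\diam(T)=3$ versus $\diam(T)\geq 4$, and invoke Theorem \ref{spectral-min} in the latter case), it handles the diameter-$3$ case by a genuinely different and arguably cleaner mechanism. The paper's proof relies on its explicit spectrum computation for $(T_{n,3}^{a,b})^c$ (Lemma \ref{spec-tpq} via Corollary \ref{min-spectral-diam3}) and then checks numerically that $\sqrt{\tfrac{4n+1+\sqrt{72n-63}}{2}}>4>4\cos\tfrac{\pi}{n+1}=\xi_1(P_n^c)$. You instead exploit the entrywise domination $\mathcal{E}(T^c)\geq 2A(T)$ recorded in the introduction, apply Perron--Frobenius monotonicity of the spectral radius for nonnegative matrices to get $\xi_1(T^c)\geq 2\lambda_1(T)$ uniformly in both cases, and then let the strictness in Theorem \ref{spectral-min} (namely $\lambda_1(T)>\lambda_1(P_n)$ for $T\not\cong P_n$) carry the whole argument, including the equality characterization. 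What your route buys is uniformity and independence from the explicit eigenvalue formulas (no case-by-case numeric verification); what the paper's route buys is that the same explicit formulas are reused elsewhere (for the maximum, the second largest eigenvalue, and the energy), so the computation is not wasted. One cosmetic remark: irreducibility is not actually needed for the weak inequality $\rho(\mathcal{E}(T^c))\geq\rho(2A(T))$ (entrywise domination of nonnegative matrices suffices), and you correctly do not rely on it for strictness, which you derive from the adjacency comparison instead.
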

\begin{proof}
	Since $P_4$ is the only tree on $4$ vertices with connected complement, we assume that $n\geq 5$. For $n\geq 5$, we have $\xi_1(P_n^c)=2\lambda_1(P_n)= 4\cos{\frac{\pi}{n+1}}<4$.
	
	If $T$ is a tree with diameter $3$, by Corollary \ref{min-spectral-diam3} it follows that 
	$$\xi_1(T^c)\geq \begin{cases}
		\text{$ \sqrt{\frac{4n+1+\sqrt{72n-63}}{2}}$} & \quad\text{if $n$ is even,}\\
		\text{$ \sqrt{\frac{4n+1+\sqrt{72n-47}}{2}}$} & \quad\text{if $n$ is odd.}
	\end{cases} $$
	It is easy to check that $\sqrt{\frac{4n+1+\sqrt{72n-47}}{2}}>\sqrt{\frac{4n+1+\sqrt{72n-63}}{2}}>4$. Therefore, $\xi_1(T^c)\geq \xi_1(P_n^c)$.
	
	If $T$ is a tree with $\diam(T)\geq 4$, then the proof follows from Theorem \ref{spec-min}.
\end{proof}

Now, we determine the unique tree whose complement has maximum $\mathcal{E}$-spectral radius in $\mathcal{T}_n^c$.

\begin{thm}\label{spec-max}
	Let $T$ be a tree of order $n\geq 4$. Then 
	$$\xi_1(T^c)\leq \xi_1\big((T_{n,3}^{0,n-4})^c\big) $$
	with equality if and only if $T\cong T_{n,3}^{0,n-4}$.
\end{thm}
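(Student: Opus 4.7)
The plan is to split into cases according to $\diam(T)$. Since $T^c$ must be connected, $T$ is not the star, so $\diam(T)\geq 3$.

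\textbf{Case 1: $\diam(T)=3$.} Here $T\cong T_{n,3}^{a,b}$ for some pair $a+b=n-4$, $b\geq a\geq 0$, and Corollary \ref{max-spectral-diam3} already gives the strict inequality $\xi_1(T^c)\leq \xi_1((T_{n,3}^{0,n-4})^c)$ with equality iff $T\cong T_{n,3}^{0,n-4}$.

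\textbf{Case 2: $\diam(T)\geq 4$.} Here the paper records that $\mathcal{E}(T^c)=2A(T)$, hence $\xi_1(T^c)=2\lambda_1(T)$. Since $T\ncong K_{1,n-1}$ and $T\ncong T_{n,3}^{0,n-4}$ (the latter has diameter $3$), Theorem \ref{spectral-max} yields the strict bound
\[
\xi_1(T^c)=2\lambda_1(T)<2\sqrt{\tfrac{n-1+\sqrt{n^2-6n+13}}{2}}=\sqrt{2n-2+2\sqrt{n^2-6n+13}}.
\]
On the other hand, Lemma \ref{spec-tpq} gives
\[
\xi_1\big((T_{n,3}^{0,n-4})^c\big)=\sqrt{\tfrac{4n+1+\sqrt{16n^2-56n+193}}{2}},
\]
after expanding $(4n+1)^2-64(n-3)=16n^2-56n+193$. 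Thus the case reduces to verifying the numerical inequality
\[
2(n-1)+2\sqrt{n^2-6n+13}\;\leq\;\tfrac{4n+1+\sqrt{16n^2-56n+193}}{2},
\]
or equivalently $4\sqrt{n^2-6n+13}-5\leq \sqrt{16n^2-56n+193}$. The left-hand side is positive (since the quadratic $n^2-6n+13-25/16$ has negative discriminant), so I can square. After squaring and using $16(n^2-6n+13)=16n^2-96n+208$, everything collapses to $1-n\leq \sqrt{n^2-6n+13}$, which is trivial because $1-n\leq 0$ for $n\geq 4$ while the right-hand side is nonnegative.

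The main (and essentially only) obstacle is this last computational comparison; once carried out it shows that Case 2 produces a value strictly smaller than $\xi_1((T_{n,3}^{0,n-4})^c)$, so the maximum must be attained in Case 1, and by Corollary \ref{max-spectral-diam3} it is attained uniquely at $T_{n,3}^{0,n-4}$.
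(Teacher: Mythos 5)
Your proposal is correct and follows the same route as the paper: split on $\diam(T)$, invoke Corollary \ref{max-spectral-diam3} for diameter $3$, and handle $\diam(T)\geq 4$ via $\mathcal{E}(T^c)=2A(T)$ together with Theorem \ref{spectral-max}. The paper leaves the cross-case comparison implicit, whereas you verify explicitly that $2\sqrt{\tfrac{n-1+\sqrt{n^2-6n+13}}{2}}\leq \xi_1\big((T_{n,3}^{0,n-4})^c\big)$; your algebra there (reducing to $1-n\leq\sqrt{n^2-6n+13}$) checks out, so this is a correct and slightly more complete rendering of the same argument.
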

\begin{proof}
	If $T$ is a tree with diameter $3$, then the proof follows from Corollary \ref{max-spectral-diam3}. If $T$ is a tree with $\diam(T)\geq 4$, then  $\xi_1(T^c)=2\lambda_1(T)$ and the proof follows from Theorem \ref{spectral-max}.  
\end{proof}

Now, we consider the extremal problem for the least $\mathcal{E}$-eigenvalue of complements of trees and characterize the extremal graphs. First, we show that the $\mathcal{E}$-eigenvalues of the complements of trees are symmetric about the origin.

\begin{thm}\label{spec-sym}
	Let $T$ be a tree of order $n\geq 4$. Then the eigenvalues of $\mathcal{E}(T^c)$ are symmetric about the origin, that is, if $\lambda$ is an eigenvalue of $\mathcal{E}(T^c)$ with multiplicity $k$, then $-\lambda$ is also an eigenvalue of $\mathcal{E}(T^c)$ with multiplicity $k$.
\end{thm}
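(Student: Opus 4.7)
The plan is to exhibit a diagonal $\pm 1$ signing matrix $S$ with $S\mathcal{E}(T^c)S=-\mathcal{E}(T^c)$; this makes $\mathcal{E}(T^c)$ and $-\mathcal{E}(T^c)$ similar, so they share the same spectrum, forcing the eigenvalues of $\mathcal{E}(T^c)$ to come in pairs $\{\lambda,-\lambda\}$ with matching multiplicities. The signing will come from the fact that the nonzero off-diagonal pattern of $\mathcal{E}(T^c)$ is a bipartite graph, which in turn lets us use the bipartition of $T$ (a tree, hence bipartite) to define the signs.

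First I would split into two cases by the diameter of $T$. If $\diam(T)\geq 4$, the remark in the introduction gives $\mathcal{E}(T^c)=2A(T)$, so the nonzero pattern of $\mathcal{E}(T^c)$ is exactly $T$. If $\diam(T)=3$, then $T\cong T_{n,3}^{a,b}$ for some $a,b$ with $a+b=n-4$, and the matrix explicitly written out in the proof of Lemma \ref{spec-tpq} shows that the nonzero entries of $\mathcal{E}(T^c)$ sit precisely at the pairs $\{u_i,v_1\}$, $\{v_0,v_1\}$, $\{v_1,v_2\}$, $\{v_2,v_3\}$, $\{v_2,w_j\}$, i.e.\ at the edges of $T_{n,3}^{a,b}$. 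In both cases the nonzero pattern of $\mathcal{E}(T^c)$ is exactly the edge set of the tree $T$.

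Next, let $V(T)=V_1\cup V_2$ be a bipartition of $T$ (which exists since every tree is bipartite), and define the diagonal matrix $S=\mathrm{diag}(s_v)_{v\in V(T)}$ by $s_v=1$ if $v\in V_1$ and $s_v=-1$ if $v\in V_2$. Then
\[
\bigl(S\mathcal{E}(T^c)S\bigr)_{uv}=s_u s_v\,\mathcal{E}(T^c)_{uv},
\]
and whenever $\mathcal{E}(T^c)_{uv}\neq 0$ the endpoints $u,v$ lie on opposite sides of the bipartition, so $s_us_v=-1$. Hence $S\mathcal{E}(T^c)S=-\mathcal{E}(T^c)$, and since $S^2=I$ this is a similarity. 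Therefore $\mathcal{E}(T^c)$ and $-\mathcal{E}(T^c)$ have the same multiset of eigenvalues, which is the desired symmetry about the origin.

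There is no real obstacle beyond the bookkeeping of identifying the nonzero pattern of $\mathcal{E}(T^c)$ with the edge set of $T$; the $\diam(T)\geq 4$ case is immediate from the identity $\mathcal{E}(T^c)=2A(T)$, and the $\diam(T)=3$ case is handled by appealing to the explicit matrix already computed in Lemma \ref{spec-tpq}. Once that pattern statement is in hand, the signing argument is completely routine.
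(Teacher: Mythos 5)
Your proof is correct, and it takes a somewhat different route from the paper's in the diameter-$3$ case. The paper argues by cases: for $\diam(T)\geq 4$ it uses $\mathcal{E}(T^c)=2A(T)$ together with the standard fact that bipartite graphs have adjacency spectra symmetric about the origin, and for $\diam(T)=3$ it simply reads the symmetry off the explicit spectrum computed in Lemma \ref{spec-tpq}, which already displays the nonzero eigenvalues in $\pm$ pairs. You instead isolate the single structural fact that does all the work --- the support of $\mathcal{E}(T^c)$ coincides with the edge set of the bipartite graph $T$ in both cases (with the entry $3$ on the central edge $v_1v_2$ when $\diam(T)=3$ being irrelevant to the sign pattern) --- and then run the signing similarity $S\mathcal{E}(T^c)S=-\mathcal{E}(T^c)$ once, uniformly. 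Your key claim about the support is verified correctly: for $\diam(T)\geq 4$ it is immediate from $\mathcal{E}(T^c)=2A(T)$, and for $\diam(T)=3$ the matrix displayed in the proof of Lemma \ref{spec-tpq} (noting that every diameter-$3$ tree is a double star $T_{n,3}^{a,b}$) has nonzero entries exactly at the edges of $T$. What your approach buys is a unified, computation-free argument that makes the mechanism transparent and does not depend on the explicit eigenvalues of the quotient matrix; what the paper's approach buys is that the diameter-$3$ case comes for free from a lemma it needs anyway. Note also that the paper's appeal to bipartiteness in the $\diam(T)\geq 4$ case is, under the hood, exactly your signing argument, so the two proofs genuinely diverge only in how they handle diameter $3$.
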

\begin{proof}
	If $T$ is a tree with diameter $3$, then the proof follows from Lemma \ref{spec-tpq}. Let $T$ be a tree with $\diam(T)\geq 4$. Then $\mathcal{E}(T^c)=2A(T)$. Since every tree is a bipartite graph, the eigenvalues of $A(T)$ are symmetric about the origin, and hence the $\mathcal{E}$-eigenvalues of $T^c$ are also symmetric about the origin.
\end{proof}

Let $\xi_n(T^c)$ denote the least $\mathcal{E}$-eigenvalue of $\mathcal{E}(T^c)$. In the following theorems, we characterize the trees whose complements have minimum and maximum least $\mathcal{E}$-eigenvalue in $\mathcal{T}_n^c$, respectively.

\begin{thm}
	Let $T$ be a tree of order $n\geq 4$. Then 
	$$\xi_n(T^c)\geq \xi_n\big((T_{n,3}^{0,n-4})^c\big) $$
	with equality if and only if $T\cong T_{n,3}^{0,n-4}$.
\end{thm}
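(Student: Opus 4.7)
The plan is to reduce this to the already-established maximum $\mathcal{E}$-spectral radius result via the symmetry of the spectrum. By Theorem \ref{spec-sym}, the $\mathcal{E}$-eigenvalues of $T^c$ for any tree $T$ on $n \geq 4$ vertices are symmetric about the origin, so in particular the least eigenvalue satisfies $\xi_n(T^c) = -\xi_1(T^c)$. This identity holds uniformly for every tree in $\mathcal{T}_n^c$, including the extremal candidate $T_{n,3}^{0,n-4}$.

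Given this, the inequality $\xi_n(T^c) \geq \xi_n\bigl((T_{n,3}^{0,n-4})^c\bigr)$ is equivalent to
\[
-\xi_1(T^c) \;\geq\; -\xi_1\bigl((T_{n,3}^{0,n-4})^c\bigr),
\]
which is just
\[
\xi_1(T^c) \;\leq\; \xi_1\bigl((T_{n,3}^{0,n-4})^c\bigr).
\]
But this is exactly the content of Theorem \ref{spec-max}, which also provides the equality characterization: equality holds if and only if $T \cong T_{n,3}^{0,n-4}$. Applying the symmetry in reverse then gives the equality case for $\xi_n$ as well.

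So the proof is essentially a two-line argument: invoke Theorem \ref{spec-sym} to convert the least eigenvalue into the negative of the spectral radius, and then invoke Theorem \ref{spec-max} to get the desired bound and the uniqueness of the extremizer. There is no real obstacle here, since both ingredients are already in hand; the main thing to be careful about is simply checking that the symmetry and the equality characterization combine correctly to recover the ``if and only if'' statement for $\xi_n$.
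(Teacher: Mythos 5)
Your proposal is correct and matches the paper's argument exactly: the paper also derives this result directly from Theorem \ref{spec-sym} (symmetry of the spectrum, giving $\xi_n(T^c) = -\xi_1(T^c)$) combined with Theorem \ref{spec-max}. Your write-up simply makes explicit the two-line reduction that the paper leaves implicit.
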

\begin{proof}
	The proof follows from Theorem \ref{spec-max} and Theorem \ref{spec-sym}.
\end{proof}

\begin{thm}
	Let $T$ be a tree of order $n\geq 4$. Then 
	$$\xi_n(T^c)\leq \xi_n(P_n^c) $$
	with equality if and only if $T\cong P_n$.
\end{thm}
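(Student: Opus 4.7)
The plan is to derive the statement as an immediate consequence of the two results already proved in this section, namely Theorem \ref{spec-min} (minimum $\mathcal{E}$-spectral radius among complements of trees is attained uniquely by $P_n^c$) and Theorem \ref{spec-sym} (the $\mathcal{E}$-spectrum of $T^c$ is symmetric about the origin for every tree $T$).

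First I would invoke Theorem \ref{spec-sym}: because the $\mathcal{E}$-eigenvalues of $T^c$ are symmetric about $0$, the least $\mathcal{E}$-eigenvalue is exactly the negative of the $\mathcal{E}$-spectral radius, i.e., $\xi_n(T^c) = -\xi_1(T^c)$, and likewise $\xi_n(P_n^c) = -\xi_1(P_n^c)$. Then Theorem \ref{spec-min} gives $\xi_1(T^c) \geq \xi_1(P_n^c)$, and multiplying by $-1$ reverses the inequality to yield $\xi_n(T^c) \leq \xi_n(P_n^c)$.

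For the equality characterization, equality $\xi_n(T^c) = \xi_n(P_n^c)$ is equivalent to $\xi_1(T^c) = \xi_1(P_n^c)$ by the same symmetry, and Theorem \ref{spec-min} states that this holds if and only if $T \cong P_n$. There is no real obstacle here: the statement is essentially the dual of Theorem \ref{spec-max} in the previous subsection (which handled the minimum least $\mathcal{E}$-eigenvalue via Theorem \ref{spec-sym}), and the same two-line argument applies. The only thing worth double-checking is that Theorem \ref{spec-min} genuinely covers all trees of order $n \geq 4$ (including the diameter-$3$ case), which it does via Corollary \ref{min-spectral-diam3}, so the bound $\xi_1(T^c) \geq \xi_1(P_n^c)$ is valid with the stated equality case regardless of the diameter of $T$.
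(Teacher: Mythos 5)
Your proof is correct and is exactly the paper's argument: the paper derives this theorem directly from Theorem \ref{spec-min} and Theorem \ref{spec-sym}, using the spectral symmetry to write $\xi_n(T^c)=-\xi_1(T^c)$ and then negating the spectral-radius inequality, equality case included. Nothing further is needed.
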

\begin{proof}
	The proof follows from Theorem \ref{spec-min} and Theorem \ref{spec-sym}.
\end{proof}

\section{Extremal problems for the second largest $\mathcal{E}$-eigenvalue}

In this section, we study the extremal problems for the second largest $\mathcal{E}$-eigenvalue of complements of trees and characterize the extremal graphs. First, we give an ordering of the complements of trees with diameter $3$ according to their second-largest $\mathcal{E}$-eigenvalues.

\begin{thm}\label{comple-sec-ordering}
	Let $T$ be a tree with diameter $3$, that is, $T\cong T_{n,3}^{a,b}$ with $a+b=n-4$, $b\geq a\geq 0$. Then the complements of the trees $T_{n,3}^{a,b}$ can be ordered with respect to their second largest $\mathcal{E}$-eigenvalues as follows:
	$$\xi_2\big((T_{n,3}^{0,n-4})^c\big)< \xi_2\big((T_{n,3}^{1,n-3})^c\big)<\hdots < \xi_2\Big(\big(T_{n,3}^{\lfloor \frac{n-2}{2}\rfloor,\lceil \frac{n-2}{2}\rceil}\big)^c\Big). $$
\end{thm}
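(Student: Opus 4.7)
The plan is to mimic the argument used in the proof of Theorem \ref{comple-spectral-ordering}: extract an explicit closed form for $\xi_2\big((T_{n,3}^{a,b})^c\big)$ from Lemma \ref{spec-tpq}, reduce it to a one-variable function of $a$ via the constraint $b = n-4-a$, and then show it is strictly increasing on the relevant range using elementary calculus.

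First I would invoke Lemma \ref{spec-tpq}. The nonzero $\mathcal{E}$-eigenvalues of $(T_{n,3}^{a,b})^c$ are
\[
\pm\sqrt{\frac{4n+1 + \sqrt{(4n+1)^2-64(a+1)(b+1)}}{2}}, \qquad \pm\sqrt{\frac{4n+1 - \sqrt{(4n+1)^2-64(a+1)(b+1)}}{2}},
\]
and the remaining $n-4$ eigenvalues are $0$. Since $(a+1)(b+1) > 0$, the smaller positive square root is strictly positive, so when the full list of eigenvalues is arranged in decreasing order the second largest is exactly
\[
\xi_2\big((T_{n,3}^{a,b})^c\big) = \sqrt{\frac{4n+1 - \sqrt{(4n+1)^2-64(a+1)(b+1)}}{2}}.
\]

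Next I would substitute $b = n-4-a$, so that $(a+1)(b+1) = (a+1)(n-3-a)$, and introduce
\[
h(x) = 4n+1 - \sqrt{(4n+1)^2 - 64(x+1)(n-3-x)}, \qquad 0 \le x \le \tfrac{n-4}{2}.
\]
Differentiating gives
\[
h'(x) = \frac{32\,(n-4-2x)}{\sqrt{(4n+1)^2 - 64(x+1)(n-3-x)}} \ge 0,
\]
with equality only at the endpoint $x = (n-4)/2$. Hence $h$ is strictly increasing on $[0,\lfloor(n-4)/2\rfloor]$, and therefore so is $\xi_2\big((T_{n,3}^{a,b})^c\big) = \sqrt{h(a)/2}$ as $a$ ranges over the integers $0,1,\ldots,\lfloor(n-4)/2\rfloor$. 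Comparing consecutive values along this range yields the chain of strict inequalities in the statement.

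The argument is essentially routine calculus once Lemma \ref{spec-tpq} is in hand, so there is no serious obstacle. The only point that requires a moment of care is the identification of the second largest eigenvalue: one must verify that the smaller positive root, rather than $0$ or the larger positive root, occupies the second slot — this follows immediately from the positivity of $(a+1)(b+1)$ and from having the eigenvalue $\xi_1$ already accounted for. Everything else is a direct monotonicity computation parallel to the proof of Theorem \ref{comple-spectral-ordering}, with the sign of the inner square root flipped, which reverses the direction of monotonicity and produces the reversed chain of inequalities stated here.
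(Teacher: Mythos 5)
Your proposal is correct and follows essentially the same route as the paper: it extracts $\xi_2\big((T_{n,3}^{a,b})^c\big)$ from Lemma \ref{spec-tpq}, substitutes $b=n-4-a$, and shows the resulting one-variable function is increasing on $[0,\lfloor (n-4)/2\rfloor]$ via the same derivative computation. Your extra remark justifying why the smaller positive root (and not $0$) is the second largest eigenvalue is a small point of care the paper leaves implicit, but the argument is otherwise identical.
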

\begin{proof}
	By Lemma \ref{spec-tpq}, it follows  that
	\begin{align*}
		\xi_2\big((T_{n,3}^{a,b})^c\big)=&~ \sqrt{\frac{4n+1-\sqrt{(4n+1)^2-64(a+1)(b+1)}}{2}}\\
		=&~ \sqrt{\frac{4n+1-\sqrt{(4n+1)^2-64(n-3+a(n-4-a))}}{2}} \qquad \text{(since, $a+b=n-4$)}.
	\end{align*}
	
	Now, consider the function $f(x)=4n+1-\sqrt{(4n+1)^2-64(n-3+x(n-4-x))}$, where $0\leq x \leq \lfloor \frac{n-4}{2}\rfloor$. Therefore, $$f^{\prime}(x)= \frac{32(n-4-2x)}{\sqrt{(4n+1)^2-64(n-3+x(n-4-x))}}\geq 0.$$
	Hence, $f(x)$ is an increasing function of $x$ for $0\leq x \leq \lfloor \frac{n-4}{2}\rfloor$. Therefore, $\xi_2\big((T_{n,3}^{a,b})^c\big)=\sqrt{\frac{f(a)}{2}}$ is an increasing function for $0\leq a \leq \lfloor \frac{n-4}{2}\rfloor$. This completes the proof.
\end{proof}

As a consequence of the above result, we get the following corollaries.

\begin{cor}\label{min-second-diam3}
	If $T$ is a tree with diameter $3$, then
	$$\xi_2(T^c)\geq 2\sqrt{4n+1-\sqrt{(4n+1)^2-64(n-3)}} $$
	with equality if and only if $T\cong T_{n,3}^{0,n-4}$.
\end{cor}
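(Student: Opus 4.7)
The plan is to read this off directly from the ordering established in Theorem \ref{comple-sec-ordering} together with the eigenvalue formula in Lemma \ref{spec-tpq}. Since any tree $T$ with diameter $3$ is isomorphic to some $T_{n,3}^{a,b}$ with $a+b=n-4$ and $0\le a\le b$, the ordering
$$\xi_2\big((T_{n,3}^{0,n-4})^c\big)< \xi_2\big((T_{n,3}^{1,n-3})^c\big)<\cdots$$
supplied by Theorem \ref{comple-sec-ordering} immediately tells us that the minimum of $\xi_2(T^c)$ over all such $T$ is attained uniquely at $T\cong T_{n,3}^{0,n-4}$.

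It then remains only to evaluate the extremal quantity. Substituting $a=0$, $b=n-4$ into the formula
$$\xi_2\big((T_{n,3}^{a,b})^c\big)=\sqrt{\frac{4n+1-\sqrt{(4n+1)^2-64(a+1)(b+1)}}{2}}$$
from Lemma \ref{spec-tpq} gives $(a+1)(b+1)=n-3$ and hence the claimed closed form for $\xi_2\big((T_{n,3}^{0,n-4})^c\big)$. Combining the strict inequality from the ordering with this value yields both the bound and the characterization of the equality case.

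There is essentially no obstacle here, since all the work has been done in Theorem \ref{comple-sec-ordering}; the only thing to watch is that the monotonicity argument in that theorem was written for $0\le a\le \lfloor\frac{n-4}{2}\rfloor$, which already covers every tree of diameter $3$ on $n$ vertices up to isomorphism, so no further case analysis is required.
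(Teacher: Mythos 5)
Your proof is correct and is exactly the paper's argument: the paper's proof of this corollary is the one-line "follows from Lemma \ref{spec-tpq} and Theorem \ref{comple-sec-ordering}," which is what you spell out. One small caution: the value Lemma \ref{spec-tpq} actually gives at $a=0$, $b=n-4$ is $\sqrt{\tfrac{4n+1-\sqrt{(4n+1)^2-64(n-3)}}{2}}$, not the displayed $2\sqrt{4n+1-\sqrt{(4n+1)^2-64(n-3)}}$ (an apparent typo in the corollary's statement), so "hence the claimed closed form" should really read "hence the corrected closed form."
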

\begin{proof}
	The proof follows from Lemma \ref{spec-tpq} and Theorem \ref{comple-sec-ordering}.
\end{proof}

\begin{cor}\label{max-second-diam3}
	If $T$ is a tree with diameter $3$, then
	\begin{align*}
		\xi_2(T^c)\leq \begin{cases}
			\text{$ \sqrt{\frac{4n+1-\sqrt{72n-63}}{2}}$} & \quad\text{if $n$ is even,}\\
			\text{$ \sqrt{\frac{4n+1-\sqrt{72n-47}}{2}}$} & \quad\text{if $n$ is odd.}
		\end{cases} 
	\end{align*}
	The equality holds if and only if $T\cong T_{n,3}^{\lfloor \frac{n-4}{2}\rfloor,\lceil \frac{n-4}{2}\rceil}$.
\end{cor}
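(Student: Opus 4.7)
The plan is to combine the monotonicity established in Theorem \ref{comple-sec-ordering} with a direct evaluation of the $\mathcal{E}$-spectrum formula from Lemma \ref{spec-tpq} at the balanced endpoint. First, since every tree of diameter $3$ on $n$ vertices is of the form $T_{n,3}^{a,b}$ for some pair $(a,b)$ with $a+b=n-4$ and $b\geq a\geq 0$, Theorem \ref{comple-sec-ordering} immediately implies that among such trees the second largest $\mathcal{E}$-eigenvalue $\xi_2(T^c)$ attains its unique maximum at $a=\lfloor \frac{n-4}{2}\rfloor$, $b=\lceil \frac{n-4}{2}\rceil$. Thus for any diameter-$3$ tree $T$ with $T\ncong T_{n,3}^{\lfloor (n-4)/2\rfloor,\lceil (n-4)/2\rceil}$ we have a strict inequality $\xi_2(T^c)<\xi_2\bigl((T_{n,3}^{\lfloor (n-4)/2\rfloor,\lceil (n-4)/2\rceil})^c\bigr)$.

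Next, I would compute the right-hand side by substituting $a=\lfloor \frac{n-4}{2}\rfloor$ and $b=\lceil \frac{n-4}{2}\rceil$ into the formula
\[
\xi_2\big((T_{n,3}^{a,b})^c\big) = \sqrt{\frac{4n+1-\sqrt{(4n+1)^2-64(a+1)(b+1)}}{2}}
\]
given by Lemma \ref{spec-tpq}, treating the two parities separately. When $n$ is even, $a+1=b+1=\frac{n-2}{2}$, so $64(a+1)(b+1)=16(n-2)^2$ and $(4n+1)^2-16(n-2)^2$ simplifies to $72n-63$. When $n$ is odd, $a+1=\frac{n-3}{2}$ and $b+1=\frac{n-1}{2}$, so $64(a+1)(b+1)=16(n-3)(n-1)$ and $(4n+1)^2-16(n-1)(n-3)$ simplifies to $72n-47$. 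Plugging these simplifications back into the formula yields exactly the piecewise expression in the statement.

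Combining the two steps, the corollary follows: the balanced tree $T_{n,3}^{\lfloor (n-4)/2\rfloor,\lceil (n-4)/2\rceil}$ achieves the stated value, and it is the unique maximizer by the strict ordering in Theorem \ref{comple-sec-ordering}. There is no real obstacle here; the only care needed is in handling the floor and ceiling correctly in the two parity cases, which is routine algebra. Essentially this is the mirror of the argument used for Corollary \ref{min-spectral-diam3}, with the role of the inner $\pm$ sign inside the square root flipped because we are now looking at the second largest eigenvalue rather than the spectral radius.
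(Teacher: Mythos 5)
Your proposal is correct and follows exactly the paper's own argument: evaluate the formula from Lemma \ref{spec-tpq} at the balanced tree $T_{n,3}^{\lfloor (n-4)/2\rfloor,\lceil (n-4)/2\rceil}$ in the two parity cases (your simplifications to $72n-63$ and $72n-47$ check out), and invoke the strict ordering of Theorem \ref{comple-sec-ordering} for all other diameter-$3$ trees. No gaps.
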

\begin{proof}
	If $T\cong T_{n,3}^{\lfloor \frac{n-4}{2}\rfloor,\lceil \frac{n-4}{2}\rceil}$, by Lemma \ref{spec-tpq} it follows that
	\begin{align*}
		\xi_2(T^c)=&~ \sqrt{\frac{4n+1-\sqrt{(4n+1)^2-64\big(\lceil \frac{n-2}{2}\rceil \lfloor \frac{n-2}{2}\rfloor\big)}}{2}}\\
		=&~ \begin{cases}
			\text{$ \sqrt{\frac{4n+1-\sqrt{72n-63}}{2}}$} & \quad\text{if $n$ is even,}\\
			\text{$ \sqrt{\frac{4n+1-\sqrt{72n-47}}{2}}$} & \quad\text{if $n$ is odd.}
		\end{cases}
	\end{align*}
	If $T\ncong T_{n,3}^{\lfloor \frac{n-4}{2}\rfloor,\lceil \frac{n-4}{2}\rceil}$, by Theorem \ref{comple-sec-ordering} it follows that $\xi_2(T^c)<\xi_2\Big(\big(T_{n,3}^{\lfloor \frac{n-2}{2}\rfloor,\lceil \frac{n-2}{2}\rceil}\big)^c\Big)$. This completes the proof.
\end{proof}

In the following theorem, we determine the unique tree whose complement has minimum second largest $\mathcal{E}$-eigenvalue in $\mathcal{T}_n^c$.

\begin{thm}\label{comple-sec-larg-min}
	Let $T$ be a tree of order $n\geq 4$. Then 
	$$\xi_2(T^c)\geq \xi_2\big((T_{n,3}^{0,n-4})^c\big) $$
	with equality if and only if $T\cong T_{n,3}^{0,n-4}$.
\end{thm}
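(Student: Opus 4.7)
The plan is to split the argument into two cases based on $\diam(T)$, handling each with one preliminary result.

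In the first case, $\diam(T) = 3$, the tree $T$ must be of the form $T_{n,3}^{a,b}$ with $a+b = n-4$ and $b \geq a \geq 0$. Corollary \ref{min-second-diam3} immediately yields $\xi_2(T^c) \geq \xi_2\big((T_{n,3}^{0,n-4})^c\big)$ together with the equality characterization $T \cong T_{n,3}^{0,n-4}$, so this case is done.

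In the second case, $\diam(T) \geq 4$, the remark from the introduction gives $\mathcal{E}(T^c) = 2A(T)$, so $\xi_2(T^c) = 2\lambda_2(T)$. Because $\diam(T) \geq 4$ excludes both $K_{1,n-1}$ (diameter $2$) and $T_{n,3}^{0,n-4}$ (diameter $3$), Theorem \ref{sec-larg-min} yields $\lambda_2(T) \geq 1$ and hence $\xi_2(T^c) \geq 2$. To convert this into the desired inequality (which will be strict, since $T$ is not the extremal tree in this case), I must verify that $\xi_2\big((T_{n,3}^{0,n-4})^c\big) < 2$.

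This final numerical comparison is the only step needing a small computation, and I expect it to be the main---though mild---obstacle. Using Lemma \ref{spec-tpq} with $a = 0$ and $b = n-4$, the quantity $y := \xi_2\big((T_{n,3}^{0,n-4})^c\big)^2$ is the smaller root of the quadratic $y^2 - (4n+1)y + 16(n-3) = 0$. Evaluating this polynomial at $y = 4$ gives $16 - 4(4n+1) + 16(n-3) = -36 < 0$, so $y = 4$ lies strictly between the two (positive) roots; hence $y < 4$ and $\xi_2\big((T_{n,3}^{0,n-4})^c\big) < 2 \leq \xi_2(T^c)$. Combined with Case~1, this completes the argument and shows that the equality case is uniquely $T \cong T_{n,3}^{0,n-4}$.
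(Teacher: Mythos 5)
Your proposal is correct and follows essentially the same route as the paper: diameter $3$ is settled by Corollary \ref{min-second-diam3}, and for $\diam(T)\geq 4$ one combines $\xi_2(T^c)=2\lambda_2(T)\geq 2$ (via Theorem \ref{sec-larg-min}) with the bound $\xi_2\big((T_{n,3}^{0,n-4})^c\big)<2$. Your verification of that last bound by evaluating the quadratic $y^2-(4n+1)y+16(n-3)$ at $y=4$ is just a cosmetic variant of the paper's direct estimate $4n+1-\sqrt{(4n-7)^2+144}<8$.
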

\begin{proof}
	If $T$ is a tree on $n\geq 4$ vertices with diameter $3$, then the proof follows from Corollary \ref{min-second-diam3}.
	
	Let $T$ be a tree with $\diam(T)\geq 4$. Then $\mathcal{E}(T^c)=2A(T)$, and hence by Theorem \ref{sec-larg-min}, we have $\xi_2(T^c)\geq 2$. Note that $\sqrt{(4n-7)^2+144}>4n-7$ and hence $(4n+1-\sqrt{(4n-7)^2+144})<8$. Therefore, $\xi_2\big((T_{n,3}^{0,n-4})^c\big)=\sqrt{\frac{4n+1-\sqrt{(4n-7)^2+144}}{2}}<2$. Thus, $\xi_2(T^c)>\xi_2\big((T_{n,3}^{0,n-4})^c\big)$. This completes the proof. 
\end{proof}

Next, we characterize the maximal graphs for the second largest $\mathcal{E}$-eigenvalue of complements of trees.

\begin{thm}\label{comple-sec-larg-max}
	Let $T$ be a tree with $n\geq 4$ vertices. 
	\begin{enumerate}
		\item If $T \ncong T_{n,5}^{s-1,s-1}$, then $\xi_2(T)\leq \sqrt{2(n-3)}$. The equality holds if and only if $n=2s+3$ and $T\cong T_{n,4}^{s-1,s-1}, T_{n,5}^{s-2,s-1},T_{n,6}^{s-2,s-2}$.
		\item If $T \cong T_{n,5}^{s-1,s-1}$, then $\xi_2(T)=2x>\sqrt{2(n-3)}$, where $x$ is the positive root of the equation $x^3+x^2-(s+1)-s=0$. 
	\end{enumerate}
\end{thm}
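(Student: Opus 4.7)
The plan is to split on the value of $\diam(T)$, exploiting the observation already used repeatedly in this section: if $\diam(T)\geq 4$ then $\mathcal{E}(T^c)=2A(T)$, so every $\mathcal{E}$-eigenvalue of $T^c$ is exactly twice the corresponding adjacency eigenvalue of $T$. This immediately brings Theorem \ref{sec-larg-max} into play.

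For the case $\diam(T)\geq 4$, I would observe that $\xi_2(T^c)=2\lambda_2(T)$, and then $2\sqrt{(n-3)/2}=\sqrt{2(n-3)}$, so part (1) of the claim (with the bound $\sqrt{2(n-3)}$) follows directly from part (1) of Theorem \ref{sec-larg-max}. The three extremal trees $T_{n,4}^{s-1,s-1}$, $T_{n,5}^{s-2,s-1}$, $T_{n,6}^{s-2,s-2}$ all have diameter at least $4$, so the equality characterization transfers without change. Part (2) also follows from the corresponding part of Theorem \ref{sec-larg-max}: since $T_{n,5}^{s-1,s-1}$ has diameter $5$, we get $\xi_2((T_{n,5}^{s-1,s-1})^c)=2\lambda_2(T_{n,5}^{s-1,s-1})=2x$ where $x$ satisfies $x^3+x^2-(s+1)x-s=0$, and the inequality $2x>\sqrt{2(n-3)}$ comes from doubling the inequality $x>\sqrt{(n-3)/2}$.

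The remaining case is $\diam(T)=3$, i.e. $T\cong T_{n,3}^{a,b}$ for some $a+b=n-4$. Here I would invoke Corollary \ref{max-second-diam3}, which says $\xi_2(T^c)\leq \sqrt{(4n+1-\sqrt{72n-63})/2}$ for $n$ even and $\sqrt{(4n+1-\sqrt{72n-47})/2}$ for $n$ odd. To finish, I must show both of these upper bounds are strictly smaller than $\sqrt{2(n-3)}$, so that no diameter-$3$ tree can attain the bound. Squaring, this reduces to checking $4n+1-\sqrt{72n-63}<4n-12$, equivalently $\sqrt{72n-63}>13$, i.e. $72n>232$, which holds for every $n\geq 4$; the odd case is analogous ($\sqrt{72n-47}>13$ iff $72n>216$). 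So diameter-$3$ trees are strictly below the bound and cannot be extremal.

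The only mild obstacle is the bookkeeping in the diameter-$3$ case: one must be careful that the comparison between $\sqrt{(4n+1-\sqrt{72n-63})/2}$ and $\sqrt{2(n-3)}$ is strict for all $n\geq 4$ (which it is, by the elementary computation above), so that the extremal trees are exactly those identified in Theorem \ref{sec-larg-max}(1) and not augmented by any diameter-$3$ example. Everything else is a direct quotation of Theorem \ref{sec-larg-max} through the identity $\mathcal{E}(T^c)=2A(T)$.
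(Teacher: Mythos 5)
Your proposal is correct and follows essentially the same route as the paper's proof: split on $\diam(T)$, reduce the $\diam(T)\ge 4$ case to Theorem \ref{sec-larg-max} via $\mathcal{E}(T^c)=2A(T)$, and rule out the diameter-$3$ case by comparing the bound of Corollary \ref{max-second-diam3} with $\sqrt{2(n-3)}$. Your explicit verification that $\sqrt{72n-63}>13$ for $n\ge 4$ just fills in the step the paper labels ``easy to check.''
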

\begin{proof}
	If $T$ is a tree on $n\geq 4$ vertices with diameter $3$, by Corollary \ref{max-second-diam3} it follows that  
	\begin{align*}
		\xi_2(T^c)\leq \begin{cases}
			\text{$ \sqrt{\frac{4n+1-\sqrt{72n-63}}{2}}$} & \quad\text{if $n$ is even,}\\
			\text{$ \sqrt{\frac{4n+1-\sqrt{72n-47}}{2}}$} & \quad\text{if $n$ is odd.}
		\end{cases} 
	\end{align*}
	It is easy to check that $\sqrt{\frac{4n+1-\sqrt{72n-47}}{2}}<\sqrt{\frac{4n+1-\sqrt{72n-63}}{2}}<\sqrt{2(n-3)}$ for $n\geq 4$. Therefore, $\xi_2(T^c)<\sqrt{2(n-3)}$.
	
	Let $T$ be a tree with $\diam(T)\geq 4$. Then  $\xi_2(T^c)=2\lambda_2(A(T))$ and the proof follows from Theorem \ref{sec-larg-max}.
\end{proof}

Now, we give a Nordhaus-Gaddum type lower bound for the second largest $\mathcal{E}$-eigenvalue of a tree and its complement, which directly follows from Theorem \ref{second-larg} and Theorem \ref{comple-sec-larg-min}.

\begin{thm}
	Let $T$ be a tree of order $n\geq 4$. Then 
	$$\xi_2(T)+\xi_2(T^c)\geq \Bigg(\sqrt{\frac{13n-35-\sqrt{169n^2-974n+1417}}{2}}+ \sqrt{\frac{4n+1-\sqrt{16n^2-56n+193}}{2}}\Bigg)$$
	with equality if and only if $T\cong T_{n,3}^{0,n-4}$.
\end{thm}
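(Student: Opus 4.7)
The statement is announced as a direct consequence of Theorem \ref{second-larg} and Theorem \ref{comple-sec-larg-min}, so the proof is essentially an addition followed by an algebraic identification. The plan is to lower-bound the two summands separately, add the bounds, simplify the resulting square root, and then track the equality case.

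First, I would apply Theorem \ref{second-larg} to $T$ to obtain
\[
\xi_2(T)\geq \sqrt{\frac{13n-35-\sqrt{169n^2-974n+1417}}{2}},
\]
with equality if and only if $T\cong T_{n,3}^{0,n-4}$. Next, by Theorem \ref{comple-sec-larg-min}, for every tree $T$ of order $n\geq 4$,
\[
\xi_2(T^c)\geq \xi_2\bigl((T_{n,3}^{0,n-4})^c\bigr),
\]
again with equality if and only if $T\cong T_{n,3}^{0,n-4}$. The second step is to evaluate the right-hand side explicitly using Lemma \ref{spec-tpq} with $a=0$, $b=n-4$, so that $(a+1)(b+1)=n-3$. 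A short expansion gives
\[
(4n+1)^2-64(n-3)=16n^2-56n+193,
\]
and hence
\[
\xi_2\bigl((T_{n,3}^{0,n-4})^c\bigr)=\sqrt{\frac{4n+1-\sqrt{16n^2-56n+193}}{2}}.
\]

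Adding the two lower bounds yields the claimed inequality. For the equality case, since both bounds are tight exactly at $T\cong T_{n,3}^{0,n-4}$, the sum attains equality if and only if $T\cong T_{n,3}^{0,n-4}$; conversely, if $T\cong T_{n,3}^{0,n-4}$ then both summand inequalities become equalities and the total inequality is tight. There is no genuine obstacle here beyond the routine algebraic simplification of $(4n+1)^2-64(n-3)$ to match the form $\sqrt{16n^2-56n+193}$ appearing in the statement; the content of the result is already packaged in the two cited theorems.
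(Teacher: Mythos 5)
Your proposal is correct and matches the paper's argument exactly: the paper presents this theorem as a direct consequence of Theorem \ref{second-larg} and Theorem \ref{comple-sec-larg-min}, with the only computation being the evaluation $(4n+1)^2-64(n-3)=16n^2-56n+193$ via Lemma \ref{spec-tpq}, just as you describe. Your handling of the equality case (both summand bounds are tight precisely at $T\cong T_{n,3}^{0,n-4}$) is also the intended one.
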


\section{Extremal problems for $\mathcal{E}$-energy}
In this section, we characterize the complements of trees with the minimum and maximum $\mathcal{E}$-energy among the complements of all trees on $n$ vertices, respectively. To begin with, in the following lemma, we give an ordering of the complements of trees with diameter $3$ according to their $\mathcal{E}$-energy.

\begin{thm}\label{comple-energy-ordering}
	Let $T$ be a tree with diameter $3$, that is, $T\cong T_{n,3}^{a,b}$ with $a+b=n-4$, $b\geq a\geq 0$. Then the complements of $T_{n,3}^{a,b}$ can be ordered with respect to their $\mathcal{E}$-energy as follows: $$E_{\mathcal{E}}\big((T_{n,3}^{0,n-4})^c\big)< E_{\mathcal{E}}\big((T_{n,3}^{1,n-3})^c\big)<\hdots < E_{\mathcal{E}}\big((T_{n,3}^{\lfloor \frac{n-4}{2}\rfloor,\lceil \frac{n-4}{2}\rceil})^c\big). $$
\end{thm}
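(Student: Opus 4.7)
The plan is to use Lemma \ref{spec-tpq} to express the $\mathcal{E}$-energy of $(T_{n,3}^{a,b})^c$ as an explicit function of $(a+1)(b+1)$, and then show that this function is strictly increasing in $(a+1)(b+1)$. Since the $\mathcal{E}$-eigenvalues are $\pm\alpha$, $\pm\beta$ (each with multiplicity $1$) together with $0$ of multiplicity $n-4$, where
\[
\alpha = \sqrt{\tfrac{(4n+1)+\sqrt{(4n+1)^2-64(a+1)(b+1)}}{2}}, \qquad \beta = \sqrt{\tfrac{(4n+1)-\sqrt{(4n+1)^2-64(a+1)(b+1)}}{2}},
\]
the $\mathcal{E}$-energy is $E_{\mathcal{E}}((T_{n,3}^{a,b})^c) = 2\alpha + 2\beta$.

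The first key step is to simplify $2\alpha+2\beta$ by computing its square. Writing $S=4n+1$ and $P=(a+1)(b+1)$, one has $\alpha^2+\beta^2 = S$ and $\alpha^2\beta^2 = \tfrac{S^2-(S^2-64P)}{4} = 16P$, so $\alpha\beta = 4\sqrt{P}$. Therefore
\[
(2\alpha+2\beta)^2 = 4(\alpha^2+\beta^2) + 8\alpha\beta = 4S + 32\sqrt{P},
\]
which gives the clean closed form
\[
E_{\mathcal{E}}\big((T_{n,3}^{a,b})^c\big) = 2\sqrt{(4n+1)+8\sqrt{(a+1)(b+1)}}.
\]

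The second step is to analyze the monotonicity in $a$. With $b=n-4-a$, the quantity $(a+1)(b+1) = (a+1)(n-3-a)$ is a downward parabola in $a$ with maximum at $a=(n-4)/2$. Hence on the integer range $0 \le a \le \lfloor (n-4)/2 \rfloor$ it is strictly increasing, and so is its square root. Since the outer function $x \mapsto 2\sqrt{(4n+1)+8x}$ is strictly increasing in $x$, the composition $a \mapsto E_{\mathcal{E}}\big((T_{n,3}^{a,b})^c\big)$ is strictly increasing on $\{0,1,\ldots,\lfloor(n-4)/2\rfloor\}$, which gives the claimed chain of strict inequalities.

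I do not anticipate any serious obstacle: the only mild subtlety is verifying that $\alpha\beta = 4\sqrt{P}$ rather than $-4\sqrt{P}$ (which holds because $\alpha,\beta \ge 0$), and checking that $(a+1)(n-3-a)$ is indeed monotone on the range in question (rather than merely attaining its maximum at the upper endpoint). Both are immediate, so the proof reduces to the algebraic identity for $E_{\mathcal{E}}^2$ together with the elementary monotonicity of the quadratic $(a+1)(n-3-a)$.
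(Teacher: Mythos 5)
Your proposal is correct and follows essentially the same route as the paper: both reduce the energy to the closed form $2\sqrt{(4n+1)+8\sqrt{(a+1)(b+1)}}$ via the identity $(\alpha+\beta)^2=\alpha^2+\beta^2+2\alpha\beta$ and then observe that $(a+1)(n-3-a)$ increases on $0\le a\le\lfloor(n-4)/2\rfloor$ (the paper via a derivative computation, you via the vertex of the parabola, which is the same elementary fact). Your explicit check that consecutive integer steps give strict increase is a slightly cleaner justification of strictness than the paper's $f'(x)\ge 0$ argument, but the substance is identical.
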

\begin{proof}
	By Lemma \ref{spec-tpq}, it follows that $E_{\mathcal{E}}\big((T_{n,3}^{a,b})^c\big)=2\Big(\xi_1\big((T_{n,3}^{a,b})^c\big)+\xi_2\big((T_{n,3}^{a,b})^c\big)\Big)$, where
	\begin{align*}
		\xi_1\big((T_{n,3}^{a,b})^c\big)=&~ \sqrt{\frac{4n+1+\sqrt{(4n+1)^2-64(a+1)(b+1)}}{2}} \quad \text{and}\\  \xi_2\big((T_{n,3}^{a,b})^c\big)=&~ \sqrt{\frac{4n+1-\sqrt{(4n+1)^2-64(a+1)(b+1)}}{2}}.
	\end{align*}
	Therefore, 
	\begin{align*}
		E_{\mathcal{E}}\big((T_{n,3}^{a,b})^c\big)=&~ 2\sqrt{4n+1+8\sqrt{(a+1)(b+1)}}\\  
		=&~ 2\sqrt{4n+1+8\sqrt{n-3+a(n-4-a)}} \quad (\text{since,~ $a+b=n-4$}).
	\end{align*}
	
	Now, consider the function $f(x)=4n+1+8\sqrt{n-3+x(n-4-x)}$, where $0\leq x \leq \lfloor \frac{n-4}{2}\rfloor$. Therefore, 
	$$f^{\prime}(x)= \frac{8(n-4-2x)}{4n+1+8\sqrt{n-3+x(n-4-x)}}\geq 0.$$
	Hence, $f(x)$ is an increasing function of $x$, where $0\leq x \leq \lfloor \frac{n-4}{2}\rfloor$. Therefore, $E_{\mathcal{E}}\big((T_{n,3}^{a,b})^c\big)=2\sqrt{f(a)}$ is an increasing function for $0\leq a \leq \lfloor \frac{n-4}{2}\rfloor$. This completes the proof.
\end{proof}

As a consequence of the above result, we get the following corollaries.

\begin{cor}\label{min-ener-diam3}
	If $T$ is a tree with diameter $3$, then
	$$E_{\mathcal{E}}(T^c)\geq 2\sqrt{4n+1+8\sqrt{n-3}} $$
	with equality if and only if $T\cong T_{n,3}^{0,n-4}$.
\end{cor}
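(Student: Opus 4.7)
The plan is to read the inequality directly off of Theorem \ref{comple-energy-ordering} and then verify the stated minimum value by plugging in $(a,b)=(0,n-4)$ into the closed form obtained in the proof of that theorem. Since the hypothesis restricts to trees of diameter $3$, every such tree is of the form $T_{n,3}^{a,b}$ with $a+b=n-4$ and $0\le a\le b$, so the entire universe of candidate graphs is exactly the family covered by the ordering result.

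First, I would record the explicit formula that was derived inside the proof of Theorem \ref{comple-energy-ordering}, namely
\[
E_{\mathcal{E}}\bigl((T_{n,3}^{a,b})^c\bigr)=2\sqrt{4n+1+8\sqrt{(a+1)(b+1)}},
\]
which comes from summing the absolute values of the four nonzero $\mathcal{E}$-eigenvalues given by Lemma \ref{spec-tpq} (the other $n-4$ eigenvalues are zero). Next, I would specialize this to the extremal candidate $T_{n,3}^{0,n-4}$, for which $(a+1)(b+1)=n-3$, and observe that
\[
E_{\mathcal{E}}\bigl((T_{n,3}^{0,n-4})^c\bigr)=2\sqrt{4n+1+8\sqrt{n-3}}.
\]

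Finally, I would appeal to Theorem \ref{comple-energy-ordering}, which asserts that as $a$ ranges over $0,1,\dots,\lfloor (n-4)/2 \rfloor$, the quantity $E_{\mathcal{E}}\bigl((T_{n,3}^{a,b})^c\bigr)$ is strictly increasing. Therefore the minimum across the whole family is attained uniquely at $a=0$, i.e.\ at $T_{n,3}^{0,n-4}$. Combining this with the specialization gives both the inequality and the characterization of equality. There is essentially no obstacle here: the main work (monotonicity in $a$) was already done in Theorem \ref{comple-energy-ordering}, and the corollary only requires identifying the endpoint value and citing the ordering.
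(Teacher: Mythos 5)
Your proposal is correct and follows essentially the same route as the paper: the paper's proof likewise evaluates $\xi_1$ and $\xi_2$ of $(T_{n,3}^{0,n-4})^c$ via Lemma \ref{spec-tpq} to get the endpoint value $2\sqrt{4n+1+8\sqrt{n-3}}$ and then cites the monotonicity in Theorem \ref{comple-energy-ordering}. No gaps.
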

\begin{proof}
	By Lemma \ref{spec-tpq}, it follows that 
	\begin{align*}
		\xi_1\big((T_{n,3}^{0,n-4})^c\big)=&~ \sqrt{\frac{4n+1+\sqrt{(4n+1)^2-64(n-3)}}{2}}, \quad \text{and}\\
		\xi_2\big((T_{n,3}^{0,n-4})^c\big)=&~ \sqrt{\frac{4n+1-\sqrt{(4n+1)^2-64(n-3)}}{2}}. 
	\end{align*}
	Thus, 
	\begin{align*}
		E_{\mathcal{E}}\big((T_{n,3}^{0,n-4})^c\big)=&~ 2\Big(\xi_1\big((T_{n,3}^{0,n-4})^c\big)+\xi_2\big((T_{n,3}^{0,n-4})^c\big)\Big)=2\sqrt{4n+1+8\sqrt{n-3}}.
	\end{align*}
	Now, the proof follows from Theorem \ref{comple-energy-ordering}.
\end{proof}

\begin{cor}\label{max-ener-diam3}
	If $T$ is a tree with diameter $3$, then
	\begin{align*}
		E_{\mathcal{E}}(T^c)\leq \begin{cases}
			\text{$2\sqrt{8n-7}$} & \quad\text{if $n$ is even,}\\
			\text{$2\sqrt{4n+1+4\sqrt{n^2-4n+3}}$} & \quad\text{if $n$ is odd.}
		\end{cases}    
	\end{align*}
	The equality holds if and only if $T\cong T_{n,3}^{\lfloor \frac{n-4}{2}\rfloor,\lceil \frac{n-4}{2}\rceil}$.
\end{cor}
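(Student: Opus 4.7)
The plan is to mimic the proof of Corollary \ref{min-ener-diam3} almost verbatim, but evaluated at the balanced double broom $T_{n,3}^{\lfloor (n-4)/2\rfloor,\lceil(n-4)/2\rceil}$ rather than at the unbalanced one. The ordering result Theorem \ref{comple-energy-ordering} already tells us that among the family $\{T_{n,3}^{a,b}:a+b=n-4,\,b\geq a\geq 0\}$ the $\mathcal{E}$-energy is strictly increasing in $a$, so the maximum over trees of diameter~$3$ must occur at the most balanced choice $a=\lfloor(n-4)/2\rfloor$, $b=\lceil(n-4)/2\rceil$. The only remaining work is the explicit evaluation of the closed-form expression at that choice.

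Concretely, I would first recall from the proof of Theorem \ref{comple-energy-ordering} (which in turn uses Lemma \ref{spec-tpq}) the identity
\[
E_{\mathcal{E}}\bigl((T_{n,3}^{a,b})^c\bigr)=2\sqrt{4n+1+8\sqrt{(a+1)(b+1)}}.
\]
Then I would split on the parity of $n$. If $n$ is even, then $n-4$ is even and $a=b=(n-4)/2$, so $(a+1)(b+1)=\bigl((n-2)/2\bigr)^{2}$, giving $\sqrt{(a+1)(b+1)}=(n-2)/2$ and hence
\[
E_{\mathcal{E}}\bigl((T_{n,3}^{(n-4)/2,(n-4)/2})^c\bigr)=2\sqrt{4n+1+4(n-2)}=2\sqrt{8n-7}.
\]
If $n$ is odd, then $\lfloor(n-4)/2\rfloor=(n-5)/2$ and $\lceil(n-4)/2\rceil=(n-3)/2$, so $(a+1)(b+1)=\bigl((n-3)/2\bigr)\bigl((n-1)/2\bigr)=(n^{2}-4n+3)/4$ and therefore
\[
E_{\mathcal{E}}\bigl((T_{n,3}^{(n-5)/2,(n-3)/2})^c\bigr)=2\sqrt{4n+1+4\sqrt{n^{2}-4n+3}},
\]
matching the stated bound in each case.

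Finally, to close the equality characterization, I would invoke Theorem \ref{comple-energy-ordering}: for any tree $T$ of diameter $3$ that is not isomorphic to $T_{n,3}^{\lfloor(n-4)/2\rfloor,\lceil(n-4)/2\rceil}$, one has $T\cong T_{n,3}^{a,b}$ with $a<\lfloor(n-4)/2\rfloor$, and the strict monotonicity in Theorem \ref{comple-energy-ordering} yields strict inequality $E_{\mathcal{E}}(T^{c})<E_{\mathcal{E}}\bigl((T_{n,3}^{\lfloor(n-4)/2\rfloor,\lceil(n-4)/2\rceil})^c\bigr)$.

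There is no real obstacle here; the only mildly delicate point is keeping the parity case split consistent with the floor/ceiling notation, and verifying that the simplification $4n+1+4(n-2)=8n-7$ (even case) and the non-simplification in the odd case (where $n^{2}-4n+3=(n-1)(n-3)$ is not a perfect square in general) are what one expects. Both are purely arithmetic checks, so the proof is essentially a one-line application of Theorem \ref{comple-energy-ordering} together with the explicit evaluation just described.
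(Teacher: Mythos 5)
Your proposal is correct and follows essentially the same route as the paper: evaluate the closed form $E_{\mathcal{E}}\bigl((T_{n,3}^{a,b})^c\bigr)=2\sqrt{4n+1+8\sqrt{(a+1)(b+1)}}$ (equivalently, sum $\xi_1+\xi_2$ from Lemma \ref{spec-tpq}) at the balanced choice $a=\lfloor\frac{n-4}{2}\rfloor$, $b=\lceil\frac{n-4}{2}\rceil$ with a parity split, and then invoke the strict monotonicity in Theorem \ref{comple-energy-ordering} for both the bound and the equality case. The arithmetic in both parity cases checks out, so nothing further is needed.
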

\begin{proof}
	It follows from Lemma \ref{spec-tpq} that
	\begin{align*}
		\xi_1\big((T_{n,3}^{\lfloor \frac{n-4}{2}\rfloor,\lceil \frac{n-4}{2}\rceil})^c\big)=&~ \sqrt{\frac{4n+1+\sqrt{(4n+1)^2-64\big(\lceil \frac{n-2}{2}\rceil \lfloor \frac{n-2}{2}\rfloor\big)}}{2}}\\
		=&~ \begin{cases}
			\text{$ \sqrt{\frac{4n+1+\sqrt{72n-63}}{2}}$} & \quad\text{if $n$ is even,}\\
			\text{$ \sqrt{\frac{4n+1+\sqrt{72n-47}}{2}}$} & \quad\text{if $n$ is odd.}
		\end{cases}
	\end{align*}
	and
	\begin{align*}
		\xi_2\big((T_{n,3}^{\lfloor \frac{n-4}{2}\rfloor,\lceil \frac{n-4}{2}\rceil})^c\big)=&~ \sqrt{\frac{4n+1-\sqrt{(4n+1)^2-64\big(\lceil \frac{n-2}{2}\rceil \lfloor \frac{n-2}{2}\rfloor\big)}}{2}}\\
		=&~ \begin{cases}
			\text{$ \sqrt{\frac{4n+1-\sqrt{72n-63}}{2}}$} & \quad\text{if $n$ is even,}\\
			\text{$ \sqrt{\frac{4n+1-\sqrt{72n-47}}{2}}$} & \quad\text{if $n$ is odd.}
		\end{cases}
	\end{align*}
	Therefore, 
	\begin{align*}
		E_{\mathcal{E}}\big((T_{n,3}^{\lfloor \frac{n-4}{2}\rfloor,\lceil \frac{n-4}{2}\rceil})^c\big)=&~ 2\Big(\xi_1\big((T_{n,3}^{\lfloor \frac{n-4}{2}\rfloor,\lceil \frac{n-4}{2}\rceil})^c\big)+\xi_2\big((T_{n,3}^{\lfloor \frac{n-4}{2}\rfloor,\lceil \frac{n-4}{2}\rceil})^c\big)\Big)\\
		=&~ \begin{cases}
			\text{$2\sqrt{8n-7}$} & \quad\text{if $n$ is even,}\\
			\text{$2\sqrt{4n+1+4\sqrt{n^2-4n+3}}$} & \quad\text{if $n$ is odd.}
		\end{cases}
	\end{align*}
	Now, the proof follows from Theorem \ref{comple-energy-ordering}.
\end{proof}

Next, we find the $\mathcal{E}$-energy of $(T_{n,4}^{0,n-5})^c$ by computing the $\mathcal{E}$-spectrum of $(T_{n,4}^{0,n-5})^c$. This will be used in the proof of Theorem \ref{complement-min-energy}. 

\begin{figure}[h!]
	\centering
	\includegraphics[scale= 0.70]{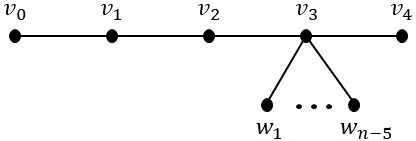}
	\caption{The tree $T_{n,4}^{0,n-5}$.}
	\label{fig3}
\end{figure}

\begin{lem}\label{comple-energy-411}
	For $n\geq 5$, the $\mathcal{E}$-energy of $E_{\mathcal{E}}\big((T_{n,4}^{0,n-5})^c\big)$ is given by 
	$$E_{\mathcal{E}}\big((T_{n,4}^{0,n-5})^c\big)=2\sqrt{4(n-1)+8\sqrt{2n-7}}.$$
\end{lem}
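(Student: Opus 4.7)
The plan is to exploit the fact that $T_{n,4}^{0,n-5}$ has diameter $4$, so by the remark in the introduction $\mathcal{E}\big((T_{n,4}^{0,n-5})^c\big) = 2 A(T_{n,4}^{0,n-5})$, which reduces the problem to computing the $A$-energy of the tree and doubling. Label the vertices so that $P_5 = v_0 v_1 v_2 v_3 v_4$ is the central path and $w_1, \ldots, w_{n-5}$ are the extra pendants attached to $v_3$. The $n-4$ leaves $v_4, w_1, \ldots, w_{n-5}$ are pairwise twins adjacent to $v_3$, so $\Pi = \{v_0\}, \{v_1\}, \{v_2\}, \{v_3\}, \{v_4, w_1, \ldots, w_{n-5}\}$ is an equitable partition; call its $5 \times 5$ quotient matrix $Q$.

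The adjacency spectrum of $T_{n,4}^{0,n-5}$ then splits into two orthogonal pieces. By Theorem \ref{quo-spec}, the five eigenvalues of $Q$ lie in the spectrum of $A(T_{n,4}^{0,n-5})$. Moreover, every vector supported on $\{v_4, w_1, \ldots, w_{n-5}\}$ with zero coordinate sum lies in $\ker A(T_{n,4}^{0,n-5})$, producing an extra $0$-eigenspace of dimension $n-5$. These contributions are mutually orthogonal and their dimensions sum to $n$, so together they exhaust the spectrum. A cofactor expansion of $\det(xI - Q)$ along the last row, using $\det(xI - A(P_4)) = x^4 - 3x^2 + 1$ for the $(5,5)$-minor, simplifies to
\[ \det(xI - Q) = x\left( x^4 - (n-1)x^2 + (2n-7) \right). \]

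Set $y_\pm = \tfrac{1}{2}\big((n-1) \pm \sqrt{(n-1)^2 - 4(2n-7)}\big)$. Since $y_+ + y_- = n-1$ and $y_+ y_- = 2n-7$ are both positive for $n \geq 5$, the numbers $y_\pm$ are positive reals, and the nonzero eigenvalues of $A(T_{n,4}^{0,n-5})$ are $\pm\sqrt{y_+}$ and $\pm\sqrt{y_-}$. Therefore
\[ E_A(T_{n,4}^{0,n-5}) = 2(\sqrt{y_+} + \sqrt{y_-}) = 2\sqrt{(y_+ + y_-) + 2\sqrt{y_+ y_-}} = 2\sqrt{(n-1) + 2\sqrt{2n-7}}, \]
and doubling gives $E_{\mathcal{E}}\big((T_{n,4}^{0,n-5})^c\big) = 4\sqrt{(n-1) + 2\sqrt{2n-7}} = 2\sqrt{4(n-1) + 8\sqrt{2n-7}}$, as claimed. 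The only slightly subtle step is the collapse $\sqrt{y_+}+\sqrt{y_-}=\sqrt{(y_++y_-)+2\sqrt{y_+y_-}}$, which is legitimate precisely because both $y_\pm$ are nonnegative; the rest is the routine $5 \times 5$ determinant expansion.
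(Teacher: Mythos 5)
Your proof is correct and follows essentially the same route as the paper's: the paper applies the same equitable partition directly to $\mathcal{E}\big((T_{n,4}^{0,n-5})^c\big)=2A(T_{n,4}^{0,n-5})$, obtains the quotient matrix (which is exactly $2Q$ in your notation), and finishes with the identical $\sqrt{y_+}+\sqrt{y_-}$ collapse. The only cosmetic difference is that you account for the multiplicity of the eigenvalue $0$ via explicit kernel vectors supported on the twin leaves, whereas the paper observes directly that the eccentricity matrix has rank $4$.
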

\begin{proof}
	The eccentricity matrix of $(T_{n,4}^{0,n-5})^c$ is given by
	\[ \mathcal{E}\big((T_{n,4}^{0,n-5})^c\big)=
	\begin{blockarray}{ccccccccc}
		& v_0 & v_1 & v_2 & v_3 & v_4 & w_1 & \hdots & w_{n-5} \\
		\begin{block}{c(cccccccc)}
			v_0 & 0 & 2 & 0 & 0 & 0 & 0 & \hdots & 0 \\
			v_1 & 2 & 0 & 2 & 0 & 0 & 0 & \hdots & 0 \\
			v_2 & 0 & 2 & 0 & 2 & 0 & 0 & \hdots & 0 \\
			v_3 & 0 & 0 & 2 & 0 & 2 & 2 & \hdots & 2 \\
			v_4 & 0 & 0 & 0 & 2 & 0 & 0 & \hdots & 0 \\
			w_1 & 0 & 0 & 0 & 2 & 0 & 0 & \hdots & 0 \\
			\vdots & \vdots & \vdots & \vdots & \vdots & \vdots & \vdots & \ddots & \vdots \\
			w_{n-5} & 0 & 0 & 0 & 2 & 0 & 0 & \hdots & 0 \\
		\end{block}
	\end{blockarray}~.\]
	
	It is easy to see that the rank of $\mathcal{E}\big((T_{n,4}^{0,n-5})^c\big)$ is $4$, and hence $0$ is an eigenvalue of $\mathcal{E}\big((T_{n,4}^{0,n-5})^c\big)$ with multiplicity $n-4$. 
	
	If $W=\{v_4,w_1,\hdots,w_{n-5}\}$, then $\Pi_2=\{v_0\}\cup \{v_1\} \cup \{v_2\} \cup \{v_3\} \cup U$ is an equitable partition of $\mathcal{E}\big((T_{n,4}^{0,n-5})^c\big)$ with the quotient matrix
	
	\[Q_2=
	\left( {\begin{array}{ccccc}
			0 & 2 & 0 & 0 & 0 \\
			2 & 0 & 2 & 0 & 0\\
			0 & 2 & 0 & 2 & 0 \\
			0 & 0 & 2 & 0 & 2(n-4) \\
			0 & 0 & 0 & 2 & 0 \\
	\end{array} } \right).\]
	Now, the eigenvalues of $Q_2$ are
	\begin{align*}
		-\sqrt{2n-2\pm 2\sqrt{n^2-10n+29}}, \quad 0, \quad \text{and} \quad  \sqrt{2n-2\pm 2\sqrt{n^2-10n+29}}.    
	\end{align*}
	Therefore, by Theorem \ref{quo-spec}, we have 
	\[ \Spec_{\mathcal{E}}\big((T_{n,4}^{0,n-5})^c\big)=
	\left\{ {\begin{array}{ccc}
			-\sqrt{2n-2\pm 2\sqrt{n^2-10n+29}} & 0  & \sqrt{2n-2\pm 2\sqrt{n^2-10n+29}}\\
			1 & n-4 & 1 \\
	\end{array} } \right\}.\]
	
	Hence, $E_{\mathcal{E}}\big((T_{n,4}^{0,n-5})^c\big)=2\Big(\sqrt{2n-2+ 2\sqrt{n^2-10n+29}}+\sqrt{2n-2-2\sqrt{n^2-10n+29}}\Big)$. If $\alpha=\sqrt{2n-2+ 2\sqrt{n^2-10n+29}}$ and $\beta=\sqrt{2n-2-2\sqrt{n^2-10n+29}}$, then $\alpha^2+\beta^2=4n-4$ and $2\alpha \beta=8\sqrt{2n-7}$. Therefore,
	\begin{align*}
		E_{\mathcal{E}}\big((T_{n,4}^{0,n-5})^c\big)=&~2(\alpha+\beta)\\
		=&~2\Big(\sqrt{\alpha^2+\beta^2+2\alpha \beta}\Big)\\
		=&~2\bigg(\sqrt{4n-4+8\sqrt{2n-7}}\bigg).
	\end{align*}
\end{proof}

\begin{thm}\label{complement-min-energy}
	Let $T$ be a tree of order $n\geq 4$. Then 
	$$E_{\mathcal{E}}(T^c)\geq E_{\mathcal{E}}\big((T_{n,3}^{0,n-4})^c\big) $$
	with equality if and only if $T\cong T_{n,3}^{0,n-4}$.
\end{thm}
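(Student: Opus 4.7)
The plan is to split on the diameter of $T$ and reduce each case to a comparison with the target graph $(T_{n,3}^{0,n-4})^c$, for which Lemma~\ref{spec-tpq} gives the closed form $E_{\mathcal{E}}((T_{n,3}^{0,n-4})^c) = 2\sqrt{4n+1+8\sqrt{n-3}}$ (see also Corollary~\ref{min-ener-diam3}). Since $T^c$ is assumed connected, $T$ is not a star, so $\diam(T)\in\{3\}\cup\{d:d\geq 4\}$.

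For the first case, $\diam(T)=3$ forces $T\cong T_{n,3}^{a,b}$ for some $a,b$ with $a+b=n-4$, and Corollary~\ref{min-ener-diam3} (a direct consequence of Theorem~\ref{comple-energy-ordering}) gives $E_{\mathcal{E}}(T^c)\geq 2\sqrt{4n+1+8\sqrt{n-3}}$ with equality iff $(a,b)=(0,n-4)$. This case is immediate.

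For the second case, $\diam(T)\geq 4$ implies $\mathcal{E}(T^c)=2A(T)$ entrywise, hence $E_{\mathcal{E}}(T^c)=2E_A(T)$. The key observation is that among the trees listed in Lemma~\ref{energy-ordering}, only $T_{n,4}^{0,n-5}$ has diameter at least $4$ (the others are the star or have diameter $3$). Therefore, for any tree $T$ with $\diam(T)\geq 4$ and $T\ncong T_{n,4}^{0,n-5}$, Lemma~\ref{energy-ordering} yields $E_A(T)>E_A(T_{n,4}^{0,n-5})$, and doubling gives $E_{\mathcal{E}}(T^c)>E_{\mathcal{E}}((T_{n,4}^{0,n-5})^c)$. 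So I only need to settle $T\cong T_{n,4}^{0,n-5}$.

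The main obstacle is the final inequality $E_{\mathcal{E}}((T_{n,4}^{0,n-5})^c)>E_{\mathcal{E}}((T_{n,3}^{0,n-4})^c)$. Using the closed form from Lemma~\ref{comple-energy-411}, this reduces to
\[
2\sqrt{4n-4+8\sqrt{2n-7}}\;>\;2\sqrt{4n+1+8\sqrt{n-3}},
\]
equivalently $8(\sqrt{2n-7}-\sqrt{n-3})>5$. Rationalising the left side as $\frac{8(n-4)}{\sqrt{2n-7}+\sqrt{n-3}}$, I would clear denominators to $8(n-4)>5(\sqrt{2n-7}+\sqrt{n-3})$ and square twice to obtain a polynomial inequality in $n$ that is easily verified for the range under consideration (the quartic leading term $64^2 n^4$ dominates the resulting $O(n^2)$ right-hand side, with a small-$n$ check completing the argument). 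Combining the three parts gives $E_{\mathcal{E}}(T^c)\geq E_{\mathcal{E}}((T_{n,3}^{0,n-4})^c)$ with equality iff $T\cong T_{n,3}^{0,n-4}$.
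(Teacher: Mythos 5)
Your architecture is the same as the paper's: split on $\diam(T)$, invoke Corollary~\ref{min-ener-diam3} when $\diam(T)=3$, and when $\diam(T)\geq 4$ use Lemma~\ref{energy-ordering} to reduce to $T_{n,4}^{0,n-5}$ and then compare $E_{\mathcal{E}}\big((T_{n,4}^{0,n-5})^c\big)=2\sqrt{4n-4+8\sqrt{2n-7}}$ (Lemma~\ref{comple-energy-411}) with $2\sqrt{4n+1+8\sqrt{n-3}}$. The gap is in your final step. The inequality $8\big(\sqrt{2n-7}-\sqrt{n-3}\big)>5$, equivalently $8(n-4)>5\big(\sqrt{2n-7}+\sqrt{n-3}\big)$, is \emph{false} for $n=5$ (it reads $8>5(\sqrt{3}+\sqrt{2})\approx 15.73$) and for $n=6$ (it reads $16>5(\sqrt{5}+\sqrt{3})\approx 19.84$); it first becomes true at $n=7$, and only barely ($24$ versus $\approx 23.23$). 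So your ``small-$n$ check'' does not complete the argument --- it refutes the step you need for $n=5,6$, and the asymptotic domination of the quartic term says nothing about those two orders. The paper avoids this by running the reduction only for $n\geq 7$ and disposing of $n=4,5,6$ by direct enumeration of the finitely many trees (its Appendix); your proof needs an analogous separate treatment of $n=5$ and $n=6$, and as written the case $\diam(T)\geq 4$ is not established there.

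It is worth stressing that this is not a cosmetic issue of constants: for $n=5$ the reduction genuinely points the wrong way, since $E_{\mathcal{E}}(P_5^c)=2E_A(P_5)=4+4\sqrt{3}\approx 10.93$ while Lemma~\ref{spec-tpq} gives $E_{\mathcal{E}}\big((T_{5,3}^{0,1})^c\big)=2\sqrt{21+8\sqrt{2}}\approx 11.37$, and similarly $E_{\mathcal{E}}\big((T_{6,4}^{0,1})^c\big)=2\sqrt{20+8\sqrt{5}}\approx 12.31$ is smaller than $E_{\mathcal{E}}\big((T_{6,3}^{0,2})^c\big)=2\sqrt{25+8\sqrt{3}}\approx 12.47$. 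So no amount of algebraic massaging of your final inequality will rescue the cases $n=5,6$; they must be handled (and, given the numbers above, scrutinized) separately.
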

\begin{proof}
	For $n=4$, $T_{n,3}^{0,0}$ is the only tree with a connected complement. For $n=5$ and $6$, it is easy to see that  $E_{\mathcal{E}}(T^c)>E_{\mathcal{E}}\big((T_{n,3}^{0,n-4})^c\big)$ (see  Appendix). So, let us assume that $T$ is a tree on $n\geq 7$ vertices.
	
	If $T$ is a tree with $\diam(T)=3$, then the proof follows from Corollary \ref{min-ener-diam3}. If $T$ is a tree with $\diam(T)\geq 4$, then, by Lemma \ref{energy-ordering}, it follows that
	$$E_{\mathcal{E}}(T^c)=2E_A(T)\geq 2E_A(T_{n,4}^{0,n-5})=E_{\mathcal{E}}\big((T_{n,4}^{0,n-5})^c\big).$$
	Again, by Lemma \ref{comple-energy-411}, we have $E_{\mathcal{E}}\big((T_{n,4}^{0,n-5})^c\big)=2\sqrt{4(n-1)+8\sqrt{2n-7}}$. Note that $4(n-1)+8\sqrt{2n-7}>4n+1+8\sqrt{n-3}$ for $n\geq 7$ and hence $E_{\mathcal{E}}\big((T_{n,4}^{0,n-5})^c\big)>E_{\mathcal{E}}\big((T_{n,3}^{0,n-4})^c\big)$ for $n\geq 7$. This completes the proof.  
\end{proof}

In the following theorem, we characterize the complements of trees with maximum $\mathcal{E}$-energy.

\begin{thm}\label{comple-max-energy}
	Let $T$ be a tree of order $n\geq 4$. Then 
	$$E_{\mathcal{E}}(T^c)\leq E_{\mathcal{E}}(P_n^c) $$
	with equality if and only if $T\cong P_n$.
\end{thm}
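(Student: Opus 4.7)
The proof splits naturally according to the diameter of $T$. In the easy regime $\diam(T) \geq 4$, the eccentricity matrix collapses to $\mathcal{E}(T^c) = 2A(T)$, so $E_{\mathcal{E}}(T^c) = 2 E_A(T)$; when $n \geq 5$ the path $P_n$ also lies in this regime, giving $E_{\mathcal{E}}(P_n^c) = 2 E_A(P_n)$, and Lemma \ref{max-energy} immediately yields $E_{\mathcal{E}}(T^c) \leq E_{\mathcal{E}}(P_n^c)$ with equality exactly at $T \cong P_n$ (the star is excluded since $T^c$ must be connected). So the substantive case is $\diam(T) = 3$, namely $T = T_{n,3}^{a,b}$.

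For the $\diam(T) = 3$ case, I would first invoke Corollary \ref{max-ener-diam3}, which gives the sharp upper bound
\[
E_{\mathcal{E}}(T^c) \leq \begin{cases} 2\sqrt{8n-7}, & n \text{ even},\\ 2\sqrt{4n+1+4\sqrt{n^2-4n+3}}, & n \text{ odd},\end{cases}
\]
and then compare this bound against the closed form $E_{\mathcal{E}}(P_n^c) = 2E_A(P_n)$ given by Theorem \ref{energy=path}. The key analytic observation is that $E_A(P_n)$ grows linearly in $n$ (with leading term $\tfrac{4n}{\pi}$, via $\csc x, \cot x \sim 1/x$ as $x \to 0$), whereas the diameter-$3$ bound grows only like $\sqrt{8n}$. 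I would make this quantitative using the elementary inequalities $\csc x > 1/x$ and $\cot x > 1/x - x/3$ on $(0,\pi/2)$ to produce an explicit linear lower bound on $E_A(P_n)$ that exceeds the $O(\sqrt{n})$ bound above for all $n \geq N_0$.

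The small cases $n < N_0$ would then be treated by direct spectrum computation, analogously to the Appendix-based treatment of $n = 5, 6$ in Theorem \ref{complement-min-energy}. This is the step I expect to be the main obstacle: for moderate $n$ (in particular near the crossover point where the $O(\sqrt n)$ upper bound and $O(n)$ lower bound become comparable) the comparison is tight, and one must either sharpen the trigonometric estimates or verify each remaining case numerically; in fact $n = 5$ deserves especially careful inspection, because $2\sqrt{21+8\sqrt 2}$ and $4(1+\sqrt 3)$ are dangerously close. Once the strict inequality is established, the equality analysis falls out: in the $\diam \geq 4$ regime, equality forces $T \cong P_n$ by Lemma \ref{max-energy}; in the $\diam = 3$ regime, the inequality is strict for every admissible $(a,b)$ (with the largest value at the balanced $T_{n,3}^{\lfloor(n-4)/2\rfloor,\lceil(n-4)/2\rceil}$ by Theorem \ref{comple-energy-ordering}), and gluing the two cases completes the proof.
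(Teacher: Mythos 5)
Your plan follows the paper's proof almost exactly: split on $\diam(T)$, use $\mathcal{E}(T^c)=2A(T)$ together with Lemma \ref{max-energy} when $\diam(T)\geq 4$, use Corollary \ref{max-ener-diam3} to get an $O(\sqrt{n})$ bound when $\diam(T)=3$, beat it with a linear lower bound on $E_A(P_n)$ from elementary trigonometric estimates, and check the leftover small $n$ by hand. For $n\geq 6$ this all works. The genuine problem is the case you yourself singled out: at $n=5$ the two quantities are not merely ``dangerously close'' --- the inequality reverses. By Lemma \ref{spec-tpq} (equivalently, Corollary \ref{max-ener-diam3} with $n=5$ odd), $E_{\mathcal{E}}\big((T_{5,3}^{0,1})^c\big)=2\sqrt{21+8\sqrt{2}}$, while $E_{\mathcal{E}}(P_5^c)=2E_A(P_5)=4(1+\sqrt{3})$. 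Squaring, $2\sqrt{21+8\sqrt{2}}>4(1+\sqrt{3})$ is equivalent to $84+32\sqrt{2}>64+32\sqrt{3}$, i.e.\ to $20>32(\sqrt{3}-\sqrt{2})\approx 10.17$, which is true. Hence $E_{\mathcal{E}}\big((T_{5,3}^{0,1})^c\big)\approx 11.37>10.93\approx E_{\mathcal{E}}(P_5^c)$, so the theorem as stated fails for $n=5$, and no completion of your argument (or of the paper's) can close that case.

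The source of the discrepancy is the paper's Appendix: the tabulated value $\approx 10.4528=4\sqrt{4+2\sqrt{2}}$ for the non-path tree on $5$ vertices equals $2E_A(T_{5,3}^{0,1})$, i.e.\ it was computed from $2A(T)$, which is not the eccentricity matrix of $(T_{5,3}^{0,1})^c$: since $d_{T^c}(v_1,v_2)=3=e_{T^c}(v_1)=e_{T^c}(v_2)$, the $(v_1,v_2)$ entry of $\mathcal{E}(T^c)$ is $3$, not $2$, exactly as recorded in Lemma \ref{spec-tpq}. So your skeleton is the right one for $n\geq 6$ (and $n=4$ is vacuous, $P_4$ being the only admissible tree), but the deferred ``direct spectrum computation'' step cannot succeed at $n=5$; the correct statement must exclude $n=5$, where the maximizer is $T_{5,3}^{0,1}$ rather than $P_n$.
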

\begin{proof}
	For $n=4$, $P_4$ is the only tree with a connected complement. For $n=5$ and $6$, it is easy to see that $E_{\mathcal{E}}(T^c)\leq E_{\mathcal{E}}(P_n^c)$ with equality if and only if $T\cong P_n$ (see, Appendix). So, let us assume that $T$ is a tree on $n\geq 7$ vertices. For $n\geq 7$, it follows from Theorem \ref{energy=path} that
	\begin{align*}
		E_{\mathcal{E}}(P_n^c)=2E_A(P_n)=&~\begin{cases}
			\text{$ 4\Big(\cot\big({\frac{\pi}{2(n+1)}}\big)-1\Big)$} & \quad\text{if $n$ is odd,}\\
			\text{$4\Big(\csc\big({\frac{\pi}{2(n+1)}}\big)-1\Big)$} & \quad\text{if $n$ is even;}
		\end{cases} \\
		\geq &~ \Big(\cot\big({\frac{\pi}{2(n+1)}}\big)-1\Big).
	\end{align*}
	
	We know that $\cot{x}>\big(\frac{1}{x}+\frac{1}{x-\pi}\big)$ for $0<x<\frac{\pi}{2}$. Since $0<\frac{\pi}{2(n+1)}<\frac{\pi}{2}$, therefore
	\begin{equation}\label{eqn1}
		E_{\mathcal{E}}(P_n^c)\geq 4\bigg(\cot\Big(\frac{\pi}{2(n+1)}\Big)-1\bigg)>4\Big(\frac{4n(n+1)}{(2n+1)\pi}-1\Big).    
	\end{equation}
	
	Let $T$ be a tree with $\diam(T)=3$. Therefore, by Corollary \ref{max-ener-diam3} it follows that
	\begin{align*}
		E_{\mathcal{E}}(T^c)\leq &~\begin{cases}
			\text{$2\sqrt{8n-7}$} & \quad\text{if $n$ is even,}\\
			\text{$2\sqrt{4n+1+4\sqrt{n^2-4n+3}}$} & \quad\text{if $n$ is odd;}
		\end{cases}\\
		\leq &~  2\sqrt{8n-7}.
	\end{align*}
	
	By using (\ref{eqn1}) one can check that 
	\begin{align*}
		E_{\mathcal{E}}(P_n^c)>4\Big(\frac{4n(n+1)}{(2n+1)\pi}-1\Big)>2\sqrt{8n-7}\geq E_{\mathcal{E}}(T^c) \quad \text{for $n\geq 7$}.   
	\end{align*}
	
	Thus, for any tree $T$ with diameter $3$, $E_{\mathcal{E}}(P_n^c)>E_{\mathcal{E}}(T^c)$. 
	
	Let $T$ be a tree with $\diam(T)\geq 4$. Therefore, $\mathcal{E}(T^c)=2A(T)$ and hence $E_{\mathcal{E}}(T^c)=2E_A(T)$. Now, by Lemma \ref{max-energy} it follows that
	$$E_{\mathcal{E}}(T^c)=2E_A(T)\leq 2E_A(P_n)= E_{\mathcal{E}}(P_n^c)$$
	and the equality holds if and only if $T\cong P_n$. This completes the proof.
\end{proof}

Finally, we obtain a Nordhaus-Gaddum type lower bound for the $\mathcal{E}$-energy of a tree and its complement, which directly follows from Theorem \ref{min-energy} and Theorem \ref{complement-min-energy}.

\begin{thm}
	Let $T$ be a tree of order $n\geq 4$. Then 
	$$E_{\mathcal{E}}(T)+E_{\mathcal{E}}(T^c)\geq 2\bigg(\sqrt{13n-35+8\sqrt{n-3}}+\sqrt{4n+1+8\sqrt{n-3}}\bigg)$$
	with equality if and only if $T\cong T_{n,3}^{0,n-4}$.
\end{thm}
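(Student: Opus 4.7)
The plan is to obtain the bound by simply adding the two lower bounds already established for $E_{\mathcal{E}}(T)$ and $E_{\mathcal{E}}(T^c)$, as the statement itself signals. Specifically, Theorem \ref{min-energy} gives
\[ E_{\mathcal{E}}(T)\geq 2\sqrt{13n-35+8\sqrt{n-3}}, \]
with equality if and only if $T\cong T_{n,3}^{0,n-4}$, while Theorem \ref{complement-min-energy} (combined with the closed-form evaluation in Corollary \ref{min-ener-diam3}) gives
\[ E_{\mathcal{E}}(T^c)\geq E_{\mathcal{E}}\bigl((T_{n,3}^{0,n-4})^c\bigr) = 2\sqrt{4n+1+8\sqrt{n-3}}, \]
again with equality if and only if $T\cong T_{n,3}^{0,n-4}$. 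Adding these two inequalities termwise yields exactly the claimed bound.

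For the equality discussion, the key observation is that both underlying inequalities are tight at precisely the same extremal tree $T_{n,3}^{0,n-4}$. Hence equality in the sum forces equality in both summands, which in turn forces $T\cong T_{n,3}^{0,n-4}$; conversely, substituting this tree clearly achieves equality in both bounds and therefore in their sum.

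The main (and essentially only) obstacle is cosmetic: one must verify that the hypothesis $n \geq 5$ required by Theorem \ref{min-energy} is compatible with the hypothesis $n \geq 4$ stated in the present theorem. Since for $n=4$ the only tree with a connected complement is $P_4 \cong T_{4,3}^{0,0}$, the statement can be checked directly in this edge case (or simply noted that the hypothesis should be read as $n\geq 5$, consistent with the surrounding theorems). No further computation is needed, since the two component bounds have already been done the heavy lifting for this Nordhaus--Gaddum inequality.
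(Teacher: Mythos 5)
Your proposal matches the paper's own argument exactly: the paper states that this Nordhaus--Gaddum bound ``directly follows from Theorem \ref{min-energy} and Theorem \ref{complement-min-energy},'' i.e., by adding the two component lower bounds, both of which are tight precisely at $T_{n,3}^{0,n-4}$. Your additional remark about reconciling the $n\geq 4$ versus $n\geq 5$ hypotheses is a reasonable minor point that the paper itself glosses over.
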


\bibliographystyle{plain}
\bibliography{ecc-ref}

\begin{thebibliography}{10}

\bibitem{brou-haem-book}
Andries~E. Brouwer and Willem~H. Haemers.
\newblock {\em Spectra of graphs}.
\newblock Universitext. Springer, New York, 2012.

\bibitem{Fan-least-comple}
Yi-Zheng Fan, Fei-Fei Zhang, and Yi~Wang.
\newblock The least eigenvalue of the complements of trees.
\newblock {\em Linear Algebra Appl.}, 435(9):2150--2155, 2011.

\bibitem{gutman1977acyclic}
Ivan Gutman.
\newblock Acyclic systems with extremal h{\"u}ckel $\pi$-electron energy.
\newblock {\em Theoretica chimica acta}, 45(2):79--87, 1977.

\bibitem{he2022largest}
Xiaocong He and Lu~Lu.
\newblock On the largest and least eigenvalues of eccentricity matrix of trees.
\newblock {\em Discrete Math.}, 345(1):Paper No. 112662, 11, 2022.

\bibitem{hofmeister1997two}
M~Hofmeister.
\newblock On the two largest eigenvalues of trees.
\newblock {\em Linear algebra and its applications}, 260:43--59, 1997.

\bibitem{knapp2009sines}
Michael~P Knapp.
\newblock Sines and cosines of angles in arithmetic progression.
\newblock {\em Mathematics magazine}, 82(5):371, 2009.

\bibitem{lei2022spectral}
Xingyu Lei and Jianfeng Wang.
\newblock Spectral determination of graphs with one positive anti-adjacency
  eigenvalue.
\newblock {\em Appl. Math. Comput.}, 422:Paper No. 126995, 13, 2022.

\bibitem{lei2021eigenvalues}
Xingyu Lei, Jianfeng Wang, and Guozheng Li.
\newblock On the eigenvalues of eccentricity matrix of graphs.
\newblock {\em Discrete Appl. Math.}, 295:134--147, 2021.

\bibitem{Li-leastsign-comple}
Shuchao Li and Shujing Wang.
\newblock The least eigenvalue of the signless {L}aplacian of the complements
  of trees.
\newblock {\em Linear Algebra Appl.}, 436(7):2398--2405, 2012.

\bibitem{Li-Distsign-comple}
Yuanjing Li, Rui Qin, and Dan Li.
\newblock On distance signless {L}aplacian spectrum of the complements of
  unicyclic graphs and trees.
\newblock {\em Linear Algebra Appl.}, 631:235--253, 2021.

\bibitem{Lin-Dist-comple}
Huiqiu Lin and Stephen Drury.
\newblock The distance spectrum of complements of trees.
\newblock {\em Linear Algebra Appl.}, 530:185--201, 2017.

\bibitem{lovasz1973eigenvalues}
L{\'a}szl{\'o} Lov{\'a}sz and J{\'o}zsef Pelik{\'a}n.
\newblock On the eigenvalues of trees.
\newblock {\em Periodica Mathematica Hungarica}, 3(1-2):175--182, 1973.

\bibitem{mahato2020spectra}
Iswar Mahato, R.~Gurusamy, M.~Rajesh Kannan, and S.~Arockiaraj.
\newblock Spectra of eccentricity matrices of graphs.
\newblock {\em Discrete Appl. Math.}, 285:252--260, 2020.

\bibitem{mahato2021spectral}
Iswar Mahato, R~Gurusamy, M~Rajesh~Kannan, and S~Arockiaraj.
\newblock On the spectral radius and the energy of eccentricity matrices of
  graphs.
\newblock {\em Linear and Multilinear Algebra}, pages 1--11, 2021.

\bibitem{mahato2022eccentricity}
Iswar Mahato and M.~Rajesh Kannan.
\newblock Eccentricity energy change of complete multipartite graphs due to
  edge deletion.
\newblock {\em Spec. Matrices}, 10:193--202, 2022.

\bibitem{mahato2022minimizers}
Iswar Mahato and M~Rajesh Kannan.
\newblock Minimizers for the energy of eccentricity matrices of trees.
\newblock {\em arXiv preprint arXiv:2208.13462}, 2022.

\bibitem{mahato2022inertia}
Iswar Mahato and M.~Rajesh~Kannan.
\newblock On the eccentricity matrices of trees: {I}nertia and spectral
  symmetry.
\newblock {\em Discrete Math.}, 345(11):Paper No. 113067, 2022.

\bibitem{patel2021energy}
Ajay~Kumar Patel, Lavanya Selvaganesh, and Sanjay~Kumar Pandey.
\newblock Energy and inertia of the eccentricity matrix of coalescence of
  graphs.
\newblock {\em Discrete Math.}, 344(12):Paper No. 112591, 11, 2021.

\bibitem{wang2020boiling}
Jianfeng Wang, Xingyu Lei, Shuchao Li, Wei Wei, and Xiaobing Luo.
\newblock On the eccentricity matrix of graphs and its applications to the
  boiling point of hydrocarbons.
\newblock {\em Chemometrics and Intelligent Laboratory Systems}, page 104173,
  2020.

\bibitem{wang2019graph}
Jianfeng Wang, Lu~Lu, Milan Randi\'{c}, and Guozheng Li.
\newblock Graph energy based on the eccentricity matrix.
\newblock {\em Discrete Math.}, 342(9):2636--2646, 2019.

\bibitem{wang2022spectraldetermination}
Jianfeng Wang, Mei Lu, Maurizio Brunetti, Lu~Lu, and Xueyi Huang.
\newblock Spectral determinations and eccentricity matrix of graphs.
\newblock {\em Adv. in Appl. Math.}, 139:Paper No. 102358, 25, 2022.

\bibitem{wang2020spectral}
Jianfeng Wang, Mei Lu, Lu~Lu, and Francesco Belardo.
\newblock Spectral properties of the eccentricity matrix of graphs.
\newblock {\em Discrete Appl. Math.}, 279:168--177, 2020.

\bibitem{wei2020solutions}
Wei Wei, Xiaocong He, and Shuchao Li.
\newblock Solutions for two conjectures on the eigenvalues of the eccentricity
  matrix, and beyond.
\newblock {\em Discrete Math.}, 343(8):111925, 21, 2020.

\bibitem{wei2022eccentricity}
Wei Wei and Shuchao Li.
\newblock On the eccentricity spectra of complete multipartite graphs.
\newblock {\em Appl. Math. Comput.}, 424:Paper No. 127036, 12, 2022.

\bibitem{wei2022characterizing}
Wei Wei, Shuchao Li, and Licheng Zhang.
\newblock Characterizing the extremal graphs with respect to the eccentricity
  spectral radius, and beyond.
\newblock {\em Discrete Math.}, 345(2):Paper No. 112686, 29, 2022.

\bibitem{yuan1989sharp}
Hong Yuan.
\newblock Sharp lower bounds on the eigenvalues of trees.
\newblock {\em Linear Algebra Appl.}, 113:101--105, 1989.

\end{thebibliography}

\section*{Appendix}\label{appendix}    

\begin{figure}[h!]
	\centering
	\includegraphics[scale= 0.60]{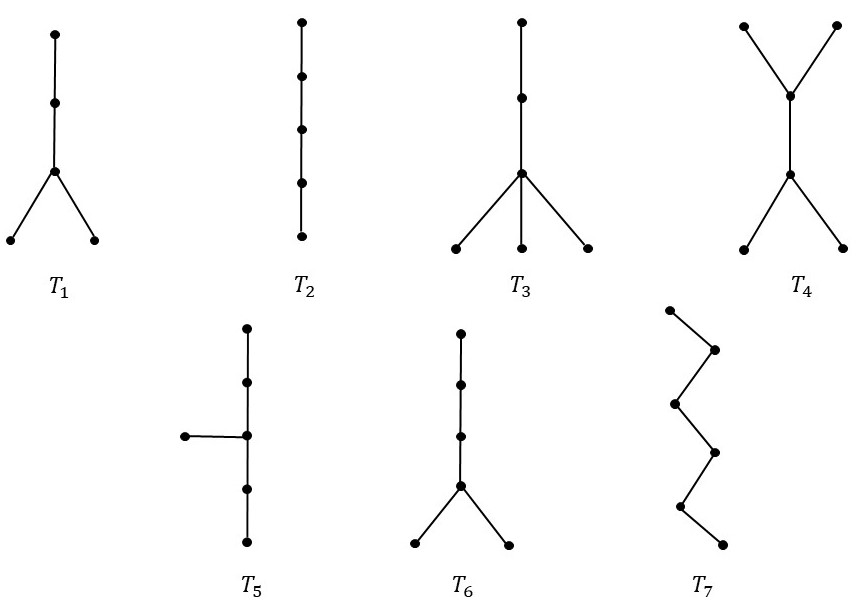}
	\caption{List of trees on $5$ and $6$ vertices with connected complements.}
\end{figure}

\begin{table}[h!]
	\centering
	\begin{tabular}{|c| c| c| c|}
		\hline
		Trees($T$) & $E_{\mathcal{E}}(T^c)$ & Trees($T$) & $E_{\mathcal{E}}(T^c)$ \\
		\hline
		$T_1$ & $\approx 10.4528$ & $T_5$ & $\approx 13.798$\\
		\hline
		$T_2$ & $\approx 10.9284$ & $T_6$ & $\approx 12.3108$\\
		\hline
		$T_3$ & $\approx 11.6372$ & $T_7$ & $\approx 13.9756$ \\
		\hline
		$T_4$ & $= 12$ &  &  \\
		\hline
	\end{tabular}
	\caption{$\mathcal{E}$-energy of complements of the trees $T_1$-$T_7$.  }
	\label{tab1}
\end{table}

\end{document}